\trivlist \item[\hskip \labelsep{\bf Proof}]}%
\trivlist \item[\hskip \labelsep{\bf Proof.} \,]}%
\def \P{{\sf I\kern-1.5ptP}}
\def \wt{\rightarrow^{\kern-8.5pt*}~}
\def \displacementRank{\nu}
\title{On the singular values of matrices with displacement structure}
\author{Bernhard Beckermann\thanks{Laboratoire Paul Painlev\'e UMR 8524 CNRS, Equipe ANO-EDP, UFR Math\'ematiques, UST Lille, F-59655 Villeneuve d'Ascq CEDEX, France. (\texttt{Bernhard.Beckermann@univ-lille1.fr}) Supported in part by the Labex CEMPI (ANR-11-LABX-0007-01).} \and Alex Townsend\thanks{Department of Mathematics, Cornell University, Ithaca, NY 14853. (\texttt{townsend@cornell.edu}) This work is supported by National Science Foundation grant No.~1522577.} }
\date{\today}
\begin{document}
\maketitle
\begin{abstract}
Matrices with displacement structure such as Pick, Vandermonde, and Hankel matrices appear in a diverse range of applications. 
In this paper, we use an extremal problem involving rational functions to derive explicit bounds on the singular values of such 
matrices. For example, we show that the $k$th singular value of a real $n\times n$ positive definite Hankel matrix, $H_n$, is 
bounded by $C\rho^{-k/\log n}\|H\|_2$ with explicitly given constants $C>0$ and $\rho>1$, where $\|H_n\|_2$ is 
the spectral norm. This means that a real $n\times n$ positive definite Hankel matrix can be approximated, up to an 
accuracy of $\epsilon\|H_n\|_2$ with $0<\epsilon<1$, by a rank $\mathcal{O}(\log n\log(1/\epsilon) )$ matrix. Analogous results are obtained for Pick, Cauchy, real 
Vandermonde, L\"{o}wner, and certain Krylov matrices.
\end{abstract} 

\begin{keywords}
singular values, displacement structure, Zolotarev, rational
\end{keywords}

\begin{AMS}
15A18, 26C15
\end{AMS}

\section{Introduction}
Matrices with rapidly decaying singular values frequently appear in computational mathematics. Such matrices are 
numerically of low rank and this is exploited in applications such as particle simulations~\cite{Greengard_87_01}, 
model reduction~\cite{Antoulas_02_01}, boundary element methods~\cite{Hackbusch_99_01}, and matrix completion~\cite{Candes_09_01}. 
However, it can be theoretically challenging to fully explain why low rank techniques 
are so effective in practice. In this paper, we derive explicit bounds 
on the singular values of matrices with displacement structure and in doing so justify many of the low rank 
techniques that are being employed on such matrices. 

Let $X\in\mathbb{C}^{m\times n}$ with $m\geq n$, $A\in\mathbb{C}^{m\times m}$, and $B\in\mathbb{C}^{n\times n}$, we say that $X$ has 
an $(A,B)$-displacement rank of $\displacementRank$ if $X$ satisfies the Sylvester matrix equation given by
\begin{equation}
A X - X B = MN^*,
\label{eq:SylvesterEquation} 
\end{equation} 
for some matrices $M\in\mathbb{C}^{m\times \displacementRank}$ and $N\in\mathbb{C}^{n\times \displacementRank}$. Matrices with displacement structure include 
Toeplitz ($\displacementRank=2$), Hankel ($\displacementRank=2$), Cauchy ($\displacementRank=1$), Krylov ($\displacementRank=1$), and Vandermonde ($\displacementRank=1$) matrices, as well as Pick ($\displacementRank=2$), 
Sylvester ($\displacementRank=2$), and L\"{o}wner ($\displacementRank=2$) matrices. Fast algorithms for computing matrix-vector products and for 
solving systems of linear equations can be derived for many of these matrices by exploiting~\eqref{eq:SylvesterEquation}~\cite{Heinig_84_01,Morf_1974_01}. 

In this paper, we use the displacement structure to derive explicit bounds on the singular values 
of matrices that satisfy~\eqref{eq:SylvesterEquation} by using an extremal problem for rational functions from complex approximation theory. 
In particular, we prove that the following inequality holds (see Theorem~\ref{thm:ratio}): 
\begin{equation}
\sigma_{j+\displacementRank k}(X) \leq Z_k(E,F)\sigma_j(X), \qquad 1\leq j+\displacementRank k\leq n,
\label{eq:SingularValuesBoundedByZolotarev} 
\end{equation} 
where $\sigma_1(X),\ldots,\sigma_n(X)$ denote the singular values of $X$ and $Z_k(E,F)$ is the
Zolotarev number~\eqref{eq:zolotarev} for complex sets $E$ and $F$ that depend on $A$ and $B$.   Researchers have previously exploited the 
connection between the Sylvester matrix equation and Zolotarev numbers for selecting algorithmic parameters in the Alternating Direction Implicit (ADI) 
method~\cite{b11,Birkhoff_62_01,Lebedev_77_01}, and others have demonstrated that the singular values of matrices satisfying certain
Sylvester matrix equations have rapidly decaying singular values~\cite{Antoulas_02_01,Baker_15_01,penzl}. Here, we derive explicit bounds on all 
the singular values of structured matrices.  Table~\ref{tab:SummaryTable} summarizes our main singular value bounds.
\begin{table} 
\centering
\begin{tabular}{cccc}
Matrix class & Notation & Singular value bound & Ref.\\ 
\hline 
\rule{0pt}{1em}Pick & $P_n$ & $\sigma_{1+2k}(P_n) \leq C_1\rho_1^{-k}\|P_n\|_2$ & Sec.~\ref{sec:PickMatrices} \\[5pt]
Cauchy & $C_{m,n}$ & $\sigma_{1+k}(C_{m,n}) \leq C_2\rho_2^{-k}\|C_{m,n}\|_2$ & Sec.~\ref{sec:CauchyMatrices} \\[5pt]
L\"{o}wner &$L_n$ & $\sigma_{1+2k}(L_n) \leq C_3\rho_3^{-k}\|L_n\|_2$ & Sec.~\ref{sec:LoewnerMatrices} \\[5pt]
Krylov, Herm.~arg.~& $K_{m,n}$ & $\!\!\sigma_{1+2k}(K_{m,n}) \leq C_4\rho_4^{-k/\log n}\|K_{m,n}\|_2\!\!$ & Sec.~\ref{sec:VandermondeMatrices}\\[5pt]
Real Vandermonde &$V_{m,n}$ & $\!\sigma_{1+2k}(V_{m,n}) \leq C_5\rho_5^{-k/\log n}\|V_{m,n}\|_2\!$ & Sec.~\ref{sec:VandermondeMatrices} \\[5pt]
Pos.~semidef.~Hankel &$H_n$ & $\sigma_{1+2k}(H_n) \leq C_6\rho_6^{-k/\log n}\|H_n\|_2$ & Sec.~\ref{sec:PosDefHankel} \\[5pt]
\hline
\end{tabular} 
\caption{Summary of the bounds proved on the singular values of matrices with displacement structure. For the singular value bounds 
to be valid for $C_{m,n}$ and $L_n$ mild ``separation conditions" must hold (see Section~\ref{sec:PickCauchyLoewnerMatrices}).
The numbers $\rho_j$ and $C_j$  for $j=1,\ldots,6$ are given explicitly in their corresponding sections.}
\label{tab:SummaryTable}
\end{table} 

Not every matrix with displacement structure is numerically of low rank. For example, the identity matrix is a full rank Toeplitz matrix and the exchange matrix\footnote{The $n\times n$ exchange matrix $X$ is obtained by reversing the order of the rows of the $n\times n$ identity matrix, i.e., $X_{n-j+1,j} = 1$ for $1\leq j\leq n$.} 
is a full rank Hankel matrix. The properties of $A$ and $B$ in~\eqref{eq:SylvesterEquation} are crucial. If $A$ and $B$ are normal matrices, then 
one expects $X$ to be numerically of low rank only if the eigenvalues of $A$ and $B$ are well-separated (see Theorem~\ref{thm:ratio}). If $A$ and $B$ are both not normal, then
as a general rule spectral sets for $A$ and $B$ should be well-separated (see Corollary~\ref{cor:ratioExtension}).

By the Eckart--Young Theorem~\cite[Theorem 2.4.8]{vanLoan}, singular values measure the distance in the spectral norm from $X$ to the set of 
matrices of a given rank, i.e., 
\[
\sigma_j(X) = \min\left\{ \left\|X - Y\right\|_2 : Y\in\mathbb{C}^{m\times n}, \text{ } {\rm rank}(Y) = j-1\right\}. 
\]
For an $0<\epsilon<1$, we say that the {\em $\epsilon$-rank} of a matrix $X$ is $k$ 
if $k$ is the smallest integer such that $\sigma_{k+1}(X) \leq \epsilon \|X\|_2$. That is,
\begin{equation}
{\rm rank}_\epsilon(X) = \min_{k\geq 0} \left\{k : \sigma_{k+1}(X) \leq \epsilon \|X\|_2 \right\}. 
\label{eq:NumericalRank} 
\end{equation} 
Thus, we may approximate $X$ to a precision of $\epsilon \|X\|_2$ by a rank $k={\rm rank}_\epsilon(X)$ matrix. 

An immediate consequence of explicit bounds on the singular values of certain matrices is a bound on the 
$\epsilon$-rank. Table~\ref{tab:SummaryRankTable} summarizes our main upper bounds on the $\epsilon$-rank of 
matrices with displacement structure. 

\begin{table} 
\centering
\begin{tabular}{cccc}
Matrix class & Notation & Upper bound on ${\rm rank}_\epsilon(X)$ & Ref.\\ 
\hline 
\rule{0pt}{1em}Pick & $P_n$ & $2\lceil \log(4b/a)\log(4/\epsilon)/\pi^2\rceil$ & Sec.~\ref{sec:PickMatrices} \\[5pt]
Cauchy & $C_{m,n}$ & $\lceil \log(16\gamma)\log(4/\epsilon)/\pi^2\rceil$ & Sec.~\ref{sec:CauchyMatrices} \\[5pt]
L\"{o}wner &$L_n$ & $2\lceil \log(16\gamma)\log(4/\epsilon)/\pi^2\rceil$ & Sec.~\ref{sec:LoewnerMatrices} \\[5pt]
Krylov, Herm.~arg.~& $K_{m,n}$ & $2\lceil 4\log(8\lfloor n/2\rfloor/\pi)\log(4/\epsilon)/\pi^2\rceil+2$ & Sec.~\ref{sec:VandermondeMatrices}\\[5pt]
Real Vandermonde &$V_{m,n}$ & $2\lceil 4\log(8\lfloor n/2\rfloor/\pi)\log(4/\epsilon)/\pi^2\rceil+2$ & Sec.~\ref{sec:VandermondeMatrices} \\[5pt]
Pos.~semidef.~Hankel &$H_n$ & $2\lceil 2\log(8\lfloor n/2\rfloor/\pi)\log(16/\epsilon)/\pi^2\rceil+2$ & Sec.~\ref{sec:PosDefHankel} \\[5pt]
\hline
\end{tabular} 
\caption{Summary of the upper bounds proved on the $\epsilon$-rank of matrices with displacement structure. For the bounds above
to be valid for $C_{m,n}$ and $L_n$ mild ``separation conditions" must hold (see Section~\ref{sec:PickCauchyLoewnerMatrices}). The number is 
the absolute value of the cross-ratio of $a$, $b$, $c$, and $d$, see~\eqref{eq:gamma}.  The first three rows show an $\epsilon$-rank of at most $\mathcal{O}(\log\gamma\log(1/\epsilon))$ and the last three rows show an $\epsilon$-rank of at most $\mathcal{O}(\log n\log(1/\epsilon))$.}
\label{tab:SummaryRankTable}
\end{table} 

Zolotarev numbers have already proved useful for deriving tight bounds on the condition number of matrices with 
displacement structure~\cite{b26,b46}. For example, the first author proved that a real $n\times n$ positive definite 
Hankel matrix, $H_n$, with $n\geq 3$, is exponentially ill-conditioned~\cite{b46}. That is, 
\[
\kappa_2(H_n)  = \frac{\sigma_1(H_n)}{\sigma_n(H_n)} \geq \frac{\gamma^{n-1}}{16n}, \qquad \gamma \approx 3.210,
\]
and that this bound cannot be improved by more than a factor of $n$ times a modest constant. 
The Hilbert matrix given by $(H_n)_{jk} = 1/(j+k-1)$, for $1\leq j,k\leq n$, is the classic example of an exponentially ill-conditioned positive definite 
Hankel matrix~\cite[eqn.~(3.35)]{wilf}.  Similar exponential ill-conditioning has been shown for certain Krylov matrices and real Vandermonde matrices~\cite{b46}. 

This paper extends the application of Zolotarev numbers to deriving bounds on the singular values of matrices with displacement structure, not just the 
condition number. The bounds we derive are particularly tight for $\sigma_j(X)$, where $j$ is small with respect to $n$. Improved bounds 
on $\sigma_j(X)$ when $j/n\rightarrow c\in (0,1)$ may be possible with the ideas found in~\cite{BB_Gryson}. 
Nevertheless, our interest here is to justify the application of low rank techniques on matrices with displacement structure
by proving that such matrices are often well-approximated by low rank matrices.  The bounds that we derive are sufficient for this purpose. 

For an integer $k$, let $\mathcal{R}_{k,k}$ denote the set of irreducible rational functions of the form $p(x)/q(x)$, where $p$ and $q$ are polynomials of degree at most $k$. 
Given two closed disjoint sets $E,F\subset \mathbb{C}$, the corresponding Zolotarev number, $Z_k(E,F)$, is defined by
\begin{equation} 
    Z_k(E,F) := \inf_{r\in \mathcal{R}_{k,k}}  \frac{{\displaystyle \sup_{z\in E} \left|r(z)\right|}}{{\displaystyle \inf_{z\in F} \left|r(z)\right|}}, 
\label{eq:zolotarev}
\end{equation}
where the infinum is attained for some extremal rational function. As a general rule, the number 
$Z_k(E,F)$ decreases rapidly to zero with $k$ if $E$ and $F$ are sets that are disjoint and well-separated. 
Zolotarev numbers satisfy several immediate properties: for any sets $E$ and $F$ and integers 
$k$, $k_1$, and $k_2$, one has $Z_0(E,F) = 1$, $Z_k(E,F) = Z_k(F,E)$, $Z_{k+1}(E,F)\leq Z_{k}(E,F)$ and 
$Z_{k_1+k_2}(E,F)\leq Z_{k_1}(E,F)Z_{k_2}(E,F)$.  They also satisfy $Z_k(E_1,F_1)\leq Z_k(E_2,F_2)$ if $E_1\subseteq E_2$ and $F_1\subseteq F_2$ as
well as $Z_k(E,F) = Z_k(T(E),T(F))$, where $T$ is any M\"{o}bius transformation~\cite{Akh}. 
As $k\rightarrow\infty$ the value for $Z_k(E,F)$ is known asymptotically to be
\[
\lim_{k\rightarrow\infty} (Z_k(E,F))^{1/k} = \exp\left(-\frac{1}{{\rm cap}(E,F)}\right),
\]
where ${\rm cap}(E,F)$ is the logarithmic capacity of a condenser with plates $E$ and $F$~\cite{Goncar_69_01}. 

To readers that are not familiar with Zolotarev numbers, it may seem that~\eqref{eq:SingularValuesBoundedByZolotarev} trades 
a difficult task of directly bounding the singular values of a matrix $X$ with a more abstract task of understanding the behavior of 
$Z_k(E,F)$. However, Zolotarev numbers have been extensively studied in the literature~\cite{Akh,Goncar_69_01,Zolotarev} and for certain 
sets $E$ and $F$ the extremal rational function is known explicitly~\cite[Sec.~50]{Akh} (see Section~\ref{sec:Zolotarev}). Our 
major challenge for bounding singular values is to carefully select 
sets $E$ and $F$ so that one can use complex 
analysis and M\"{o}bius transformations to convert the associated extremal rational approximation problem in~\eqref{eq:zolotarev} 
into one that has an explicit bound. 

The paper is structured as follows. In Section~\ref{sec:ZolotarevBounds} we prove~\eqref{eq:SingularValuesBoundedByZolotarev}, giving us a bound on the singular values of matrices with displacement structure in terms of Zolotarev numbers.  In Section~\ref{sec:Zolotarev} we derive new sharper bounds on $Z_k([-b,-a],[a,b])$ when $0<a<b<\infty$ by correcting an infinite product formula from Lebedev (see Theorem~\ref{thm:Zproduct} and Corollary~\ref{cor:Zbound}). In Section~\ref{sec:PickCauchyLoewnerMatrices} we derive explicit bounds on the singular values of Pick, Cauchy, and L\"{o}wner matrices.  In Section~\ref{sec:HankelMatrices} we tackle the challenging 
task of showing that all real Vandermonde and positive definite Hankel matrices have rapidly decaying singular values and can be approximated, up to an 
accuracy of $0<\epsilon<1$, by a rank $\mathcal{O}(\log n \log(1/\epsilon))$ matrix. In Appendix~\ref{appendix:correction} we further detail the unfortunate consequences 
of the erroneous infinite product formula from Lebedev and present corrected results. 

\section{The singular values of matrices with displacement structure and Zolotarev numbers}\label{sec:ZolotarevBounds} 
Let $X$ be an $m\times n$ matrix with $m\geq n$ that satisfies~\eqref{eq:SylvesterEquation}.  We show that 
the singular values of $X$ can be bounded from above in terms of Zolotarev numbers.  First, we assume that $A$ and $B$ in~\eqref{eq:SylvesterEquation} are normal 
matrices and later remove this assumption in Corollary~\ref{cor:ratioExtension}. In Theorem~\ref{thm:ratio} the spectrum (set of eigenvalues) 
of $A$ and $B$ is denoted by $\sigma(A)$ and $\sigma(B)$, respectively.\footnote{The statement of Theorem~\ref{thm:ratio} was presented by the first author 
at the Cortona meeting on Structured Numerical Linear Algebra in 2008~\cite{Beckermann_presentation} as well as several other locations. The statement 
has not appeared in a publication by the first (or second) author before. Similar statements based on the presentation have appeared 
in~\cite[Theorem 2.1.1]{Sabino_06_01}, ~\cite[Theorem~4]{Simoncini_16_01}, and most recently~\cite[Theorem 4.2]{Bini_16_01}.}
\begin{theorem}
Let $A\in\mathbb{C}^{m\times m}$ and $B\in\mathbb{C}^{n\times n}$ be normal matrices with $m\geq n$ and let 
$E$ and $F$ be complex sets such that $\sigma(A)\subseteq E$ and $\sigma(B)\subseteq F$. Suppose that 
the matrix $X\in\mathbb{C}^{m\times n}$ satisfies
\[
 AX - XB = MN^*,\qquad M\in\mathbb{C}^{m\times \displacementRank},\quad N\in \mathbb{C}^{n\times \displacementRank},  
\]
where $1\leq \displacementRank\leq n$ is an integer. Then, for $j\geq 1$ the singular values of $X$ satisfy
\[
\sigma_{j+\displacementRank k}(X) \leq Z_k(E,F) \, \sigma_{j}(X),\qquad 1\leq j+\displacementRank k\leq n,
\]
where $Z_k(E,F)$ is the Zolotarev number in~\eqref{eq:zolotarev}.  
\label{thm:ratio}
\end{theorem}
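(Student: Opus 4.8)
The plan is to use the spectral decompositions of the normal matrices $A$ and $B$ to diagonalize the Sylvester equation, and then to insert an optimal (or near-optimal) Zolotarev rational function to produce a low-rank perturbation of $X$ whose rank is controlled by $\displacementRank k$. First I would write $A = U \Lambda_A U^*$ and $B = V \Lambda_B V^*$ with $U, V$ unitary and $\Lambda_A = \diag(\lambda_1,\dots,\lambda_m)$, $\Lambda_B = \diag(\mu_1,\dots,\mu_n)$ the eigenvalues, so that with $\tilde X := U^* X V$, $\tilde M := U^* M$, $\tilde N := V^* N$ the equation becomes $\Lambda_A \tilde X - \tilde X \Lambda_B = \tilde M \tilde N^*$; entrywise this reads $(\lambda_i - \mu_j)\tilde X_{ij} = (\tilde M \tilde N^*)_{ij}$. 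Since singular values are unitarily invariant, it suffices to bound $\sigma_{j+\displacementRank k}(\tilde X)$ in terms of $\sigma_j(\tilde X) = \sigma_j(X)$.

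Next, fix any $r \in \mathcal{R}_{k,k}$ that is admissible for the extremal problem defining $Z_k(E,F)$, say $r = p/q$ with $\deg p, \deg q \le k$ and $q$ nonvanishing on $E \cup F$ (so $r$ has no poles on $\sigma(A) \cup \sigma(B)$). The key identity is that for the matrix functions $r(A) = U\, r(\Lambda_A)\, U^*$ and $r(B) = V\, r(\Lambda_B)\, V^*$, one has entrywise
\[
\bigl(r(A) X - X r(B)\bigr)_{ij}
= \bigl(r(\Lambda_A)\tilde X - \tilde X r(\Lambda_B)\bigr)_{ij} \text{ in the rotated frame}
= \frac{r(\lambda_i) - r(\mu_j)}{\lambda_i - \mu_j}\,(\tilde M \tilde N^*)_{ij},
\]
and since $(r(\lambda)-r(\mu))/(\lambda-\mu)$ is, after clearing the denominator $q(\lambda)q(\mu)$, a polynomial in $\lambda, \mu$ of degree $\le k-1$ in each variable, the right-hand side is a matrix of rank at most $\displacementRank k$: it factors as a sum over the at most $\displacementRank k$ monomials $\lambda_i^s \mu_j^t / (q(\lambda_i) q(\mu_j))$ weighted by the columns of $\tilde M$ and rows of $\tilde N^*$. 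Equivalently, $r(A)X - X r(B)$ has $(A,B)$-displacement rank at most $\displacementRank k$ (this is essentially Lemma/observation that multiplying the displacement equation by $r$ inflates the rank by a factor of $k$). Hence there is a matrix $Y$ with $\rank(Y) \le \displacementRank k$ and $r(A)X - X r(B) = Y$; I would then write $X - Y' $ for a suitable correction. More precisely, from $r(A) X - X r(B) = Y$ we get, multiplying through appropriately, that $X = r(A)^{-1} Y + r(A)^{-1} X r(B)$ only if $r(A)$ is invertible — instead the cleaner route is: set $W := r(A)^{-1}\bigl(X r(B) + Y\bigr)$? No — I would instead directly observe that $r(A) X r(B)^{-1} - X = r(A)\, (\text{something})$; the truly clean statement is that $\widehat X := X - r(A)^{-1} Y$ satisfies $r(A)\widehat X = X r(B)$, i.e. $\widehat X = r(A)^{-1} X r(B)$, assuming $r$ has no poles on $\sigma(A)$.

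With $\widehat X := r(A)^{-1} X r(B)$, which differs from $X$ by a matrix of rank at most $\displacementRank k$, the Eckart--Young/Courant--Fischer characterization of singular values gives
\[
\sigma_{j+\displacementRank k}(X) \le \sigma_j(X - \widehat X + \widehat X) \text{ combined with } \sigma_{a+b-1}(P+Q)\le \sigma_a(P)+\sigma_b(Q)
\]
applied with $P = \widehat X$ of no constraint and $Q = X - \widehat X$ of rank $\le \displacementRank k$ (so $\sigma_{\displacementRank k+1}(Q)=0$), yielding $\sigma_{j+\displacementRank k}(X) \le \sigma_j(\widehat X)$. Finally, since $A, B$ are normal, $\|r(A)^{-1}\|_2 = 1/\inf_{z\in\sigma(A)}|r(z)| \le 1/\inf_{z\in F}|r(z)|$ is wrong — I must be careful: $\sigma(A)\subseteq E$, so $\|r(A)^{-1}\|_2 = \max_{z\in\sigma(A)} 1/|r(z)|$ and $\|r(B)\|_2 = \max_{z\in\sigma(B)}|r(z)| \le \sup_{z\in F}|r(z)|$. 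To make the ratio come out as $Z_k(E,F)$ with $E$ housing the ``small'' values, I would if necessary replace $r$ by $1/r$ (also in $\mathcal{R}_{k,k}$), so that WLOG $\sup_E |r| $ appears in the numerator and $\inf_F|r|$ in the denominator; then $\sigma_j(\widehat X) \le \|r(A)^{-1}\|_2\,\|r(B)\|_2\,\sigma_j(X) \le \bigl(\sup_{z\in E}|r(z)|/\inf_{z\in F}|r(z)|\bigr)\sigma_j(X)$. Taking the infimum over all admissible $r$ gives $\sigma_{j+\displacementRank k}(X) \le Z_k(E,F)\,\sigma_j(X)$.

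The main obstacle I expect is the rank bookkeeping in the middle step: verifying cleanly that $r(A)X - Xr(B)$ has rank at most $\displacementRank k$ and that consequently $X - r(A)^{-1} X r(B)$ has rank at most $\displacementRank k$, together with handling the possibility that $r$ has poles on $\sigma(A)$ or $\sigma(B)$ (which would make $r(A)$ or $r(B)^{-1}$ undefined). The pole issue is resolved by noting that the infimum in $Z_k(E,F)$ may be restricted to $r$ with no poles on the compact sets $E \supseteq \sigma(A)$ and $F \supseteq \sigma(B)$ without changing the value — approximating a general admissible $r$ by nearby pole-free ones — or simply by taking the extremal $r$, which is known to equioscillate and in particular has all its poles strictly off $E\cup F$. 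The degree count for $(r(\lambda)-r(\mu))/(\lambda-\mu)$ is the standard divided-difference fact: if $r = p/q$ then this equals $\bigl(p(\lambda)q(\mu) - p(\mu)q(\lambda)\bigr)/\bigl((\lambda-\mu)q(\lambda)q(\mu)\bigr)$, and the bracketed numerator is divisible by $\lambda - \mu$ with quotient a polynomial of bidegree at most $(k-1,k-1)$, hence a sum of at most $k$ rank-one terms in the appropriate factored form, which when multiplied against $MN^*$ of rank $\displacementRank$ gives rank at most $\displacementRank k$.
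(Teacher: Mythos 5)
Your argument is correct and shares the paper's overall skeleton (insert a near-extremal Zolotarev rational function $r$, show $X-r(A)^{-1}Xr(B)$ has rank at most $\nu k$, then bound operator norms via normality), but the two key technical steps are executed by genuinely different means. For the rank bound, the paper works directly with the identity $p(A)Xq(B)-q(A)Xp(B)=\sum_{\ell,\wp=0}^{k-1}c_{\ell,\wp}(A^{\ell}M)(N^*B^{\wp})$, proved by a telescoping sum for monomials and extended by linearity; you instead diagonalize $A$ and $B$ and expand the divided-difference kernel $\bigl(r(\lambda)-r(\mu)\bigr)/(\lambda-\mu)=\bigl(p(\lambda)q(\mu)-p(\mu)q(\lambda)\bigr)/\bigl((\lambda-\mu)q(\lambda)q(\mu)\bigr)$ into at most $k$ separable terms. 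Both are valid, but note that the paper's version of the lemma holds for arbitrary (non-normal) $A,B$ and is reused verbatim in Corollary~\ref{cor:ratioExtension} with $K$-spectral sets, whereas your derivation leans on the spectral decomposition and would not transfer; your route also tacitly needs $\lambda_i\neq\mu_j$ for all $i,j$, which you should flag as following from the disjointness of $E$ and $F$ built into the definition of $Z_k$. For the singular-value step, the paper constructs an explicit rank-$(j+\nu k-1)$ approximant $Y+r(A)X_{j-1}r(B)^{-1}$ and invokes Eckart--Young, while you apply the Weyl-type inequality $\sigma_{a+b-1}(P+Q)\leq\sigma_a(P)+\sigma_b(Q)$; these are equivalent in strength here, and your version is arguably cleaner. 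The remaining discrepancies you flag yourself --- the orientation of $r$ versus $1/r$ (resolved by $Z_k(E,F)=Z_k(F,E)$) and the pole/zero locations of the extremal $r$ (resolved since a finite ratio forces $q\neq0$ on $E$ and $p\neq0$ on $F$) --- are handled the same way in the paper, which simply takes the extremal function. The only blemish is expository: the mid-proof backtracking should be excised, and the phrase ``at most $\nu k$ monomials'' should be corrected to ``at most $k$ separable terms, each contributing rank at most $\nu$.''
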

\begin{proof}
Let $p(z)$ and $q(z)$ be polynomials of degree at most $k$. First, we show that 
\begin{equation} 
{\rm rank}( p(A) X q(B) - q(A) X p(B) ) \leq  \displacementRank k,\qquad \displacementRank={\rm rank}(AX-XB).
\label{eq:ADI}
\end{equation}
Suppose that $p(z)=z^s$ and $q(z)=z^t$, where $k \geq s \geq t$, then
\[
\begin{aligned} 
p(A)Xq(B) - q(A)Xp(B) &= A^t\left(A^{s-t}X - XB^{s-t}\right) B^t \\
& = \sum_{j=0}^{s-t-1} A^{t+j}(AX-XB)B^{s-1-j}\\
& = \sum_{j=0}^{s-t-1} \left(A^{t+j}M\right)\left(N^* B^{s-1-j}\right).
\end{aligned} 
\]
In the last sum we have terms of the form $(A^{\ell}  M) (N^* B^{\wp})$, with $0\leq \ell,\wp\leq k-1$. By adding together the 
terms occurring in $p(A) X q(B) - q(A) X p(B)$ for general degree~$k$ polynomials $p$ and $q$, we conclude that there exist coefficients 
$c_{\ell,\wp} \in \mathbb C$ such that
\[
p(A) X q(B) - q(A) X p(B) = \sum_{\ell,\wp=0}^{k-1} c_{\ell,\wp} \left(A^{\ell} M\right)\left(N^* B^{\wp}\right).
\]
This shows that the rank of $p(A) X q(B) - q(A) X p(B)$ is bounded above by $k$ times the number of columns 
of $M$, proving~\eqref{eq:ADI}. 
  
Now, let $r(z)=p(z)/q(z)$, where $p$ and $q$ are polynomials of degree $k$ so that $r(z)$ is the extremal rational function 
for the Zolotarev number in~\eqref{eq:zolotarev}. This means that $p(z)$ and $q(z)$ are not zero on $F$ and $E$, respectively, 
so that $p(B)$ and $q(A)$ are invertible matrices.   From~\eqref{eq:ADI} we know that $\Delta = p(A) X q(B) - q(A) X p(B)$
has rank at most $\displacementRank k$ and hence, the matrix 
\[
Y=-q(A)^{-1} \Delta p(B)^{-1} = X - r(A) X r(B)^{-1} 
\]
is of rank at most $\displacementRank k$. Let $X_j$ be the best rank $j-1$ approximation to $X$ 
in $\|\cdot\|_2$ and let $Y_{j-1} = r(A) X_{j-1} r(B)^{-1}$. Since $Y_{j-1}$ is of rank at most $j-1$, $Y+Y_{j-1}$ is of rank at most 
$j+\displacementRank k-1$. This implies that 
\[
\begin{aligned}
\sigma_{j+\displacementRank k}(X) &\leq \left\| X - Y-Y_{j-1} \right\|_2  \\&= \left\| r(A)\!\left( X - X_{j-1} \right)\! r(B)^{-1} \right\|_2 \\&\leq \left\| r(A) \right\|_2\! \left\| r(B)^{-1} \right\|_2 \sigma_j(X),
\end{aligned} 
\]
where in the last inequality we used the relation $\sigma_j(X) = \|X-X_{j-1}\|_2$. Finally, since $A$ and $B$ are normal we have $\| r(A)\|_2\leq \sup_{z\in\sigma(A)}|r(z)|$ and $\| r(B)^{-1}\|_2\leq \sup_{z\in\sigma(B)}|r(z)^{-1}|$. We conclude by the definition of $r(z)$ that 
\begin{equation}
\frac{\sigma_{j+\displacementRank k}(X)}{\sigma_{j}(X)} \leq \sup_{z\in \sigma(A)} | r(z)| \, \sup_{z\in \sigma(B)} \frac{1}{| r(z)|} =Z_k(\sigma(A),\sigma(B)) \leq Z_k(E,F),
\label{eq:NormalAssumptionEmployed}
\end{equation} 
as required. 
\end{proof}

Theorem~\ref{thm:ratio} shows that if $A$ and $B$ are normal matrices in~\eqref{eq:SylvesterEquation}, then the 
singular values decay at least as fast as $Z_k(\sigma(A),\sigma(B))$ in~\eqref{eq:zolotarev}. In particular,
when $\sigma(A)$ and $\sigma(B)$ are disjoint and well-separated we expect $Z_k(\sigma(A),\sigma(B))$ to decay 
rapidly to zero and hence, so do the singular values of $X$.

For those readers that are familar with the ADI method~\cite{Birkhoff_62_01}, an analogous proof of Theorem~\ref{thm:ratio} is to run 
the ADI method for $k$ steps with shift parameters given by the zeros and poles of the extremal rational 
function for $Z_k(E,F)$.  By doing this one constructs a rank $\displacementRank k$ approximant $X_{\displacementRank k}$ for $X$, which
shows that $\sigma_{1+\displacementRank k}(X) \leq \| X - X_{\displacementRank k}\|_2 \leq Z_{k}(E,F) \sigma_1(X)$.  The connection between 
Zolotarev numbers and the optimal parameter selection for the ADI method has been previously exploited~\cite{Lebedev_77_01}. 
We have presented the above proof here because it does not require knowledge of the ADI method.

For matrices $A$ and $B$ that are not normal, Theorem~\ref{thm:ratio} can be extended by using $K$-spectral sets~\cite{badea_13_01}. Given a matrix $A$, a complex set $E$ is said to be a $K$-spectral set for $A$ if the spectrum $\sigma(A)$ of $A$ is contained in $E$ and the inequality $\|r(A)\|_2 \leq K \|r\|_E$ holds for every bounded rational function on $E$, where $\|r\|_E = \sup_{z\in E} |r(z)|$. Similar extensions have been noted when $B = A^*$ in~\eqref{eq:SylvesterEquation} and the sets $E$ and $F$ are taken to be the fields of values for $A$ and $B$, respectively~\cite{Baker_15_01}. We have the following extension of Theorem~\ref{thm:ratio}:
\begin{corollary}
Suppose that the assumptions of Theorem~\ref{thm:ratio} hold, except that the matrices $A$ and $B$ are not necessarily normal. Also suppose that $E$ and $F$ are $K$-spectral sets for $A$ and $B$ for some fixed constant $K>0$, respectively. 
Then, we have ${\sigma_{j+\displacementRank k}(X)} \leq K^2 Z_k(E,F) \, {\sigma_{j}(X)}$. 
\label{cor:ratioExtension} 
\end{corollary}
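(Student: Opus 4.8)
The plan is to adapt the proof of Theorem~\ref{thm:ratio} essentially verbatim, replacing only the final step where normality was used. First I would take $r(z) = p(z)/q(z)$ to be the extremal rational function for $Z_k(E,F)$, so that $p$ and $q$ have degree at most $k$ and, since $\sigma(A) \subseteq E$ and $\sigma(B) \subseteq F$, the zeros of $q$ avoid $\sigma(A)$ and the zeros of $p$ avoid $\sigma(B)$; hence $q(A)$ and $p(B)$ are invertible. The identity \eqref{eq:ADI} is purely algebraic and does not use normality, so $Y = X - r(A) X r(B)^{-1}$ still has rank at most $\displacementRank k$. Exactly as before, letting $X_{j-1}$ be the best rank $j-1$ approximation to $X$ and setting $Y_{j-1} = r(A) X_{j-1} r(B)^{-1}$, the matrix $Y + Y_{j-1}$ has rank at most $j + \displacementRank k - 1$, which gives
\[
\sigma_{j+\displacementRank k}(X) \leq \left\| r(A)\left(X - X_{j-1}\right) r(B)^{-1} \right\|_2 \leq \left\| r(A) \right\|_2 \left\| r(B)^{-1} \right\|_2 \sigma_j(X).
\]

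The only new ingredient is bounding $\|r(A)\|_2$ and $\|r(B)^{-1}\|_2$ without normality. Since $E$ is a $K$-spectral set for $A$ and $r$ is bounded on $E$ (its poles, the zeros of $q$, lie outside $E$), the definition of $K$-spectral set gives $\|r(A)\|_2 \leq K \|r\|_E = K \sup_{z \in E} |r(z)|$. Similarly $1/r$ is a rational function bounded on $F$ (its poles are the zeros of $p$, which lie outside $F$), so $\|r(B)^{-1}\|_2 = \|(1/r)(B)\|_2 \leq K \|1/r\|_F = K \sup_{z \in F} |r(z)|^{-1}$. Multiplying these two estimates and invoking the definition \eqref{eq:zolotarev} of the Zolotarev number yields
\[
\frac{\sigma_{j+\displacementRank k}(X)}{\sigma_j(X)} \leq K^2 \, \sup_{z \in E} |r(z)| \, \sup_{z \in F} \frac{1}{|r(z)|} = K^2 Z_k(E,F),
\]
as claimed.

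The step requiring a little care is checking that $r$ and $1/r$ are genuinely \emph{bounded} rational functions on $E$ and $F$ respectively, so that the $K$-spectral inequality applies; this is immediate from the fact that the extremal $r$ for $Z_k(E,F)$ has all its poles in $\C \setminus E$ and all its zeros in $\C \setminus F$, since otherwise the ratio in \eqref{eq:zolotarev} would be infinite. There is no real obstacle here — the argument is a routine transcription of the normal case with $\sup_{z \in \sigma(A)}$ replaced by $K \sup_{z \in E}$ — so the proof is short, and one could even simply remark that \eqref{eq:NormalAssumptionEmployed} is replaced by the displayed chain above.
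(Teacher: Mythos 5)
Your argument is correct and is essentially the paper's own proof: rerun Theorem~\ref{thm:ratio} verbatim, note that the rank argument and the invertibility of $q(A)$ and $p(B)$ survive without normality, and replace the two norm bounds $\|r(A)\|_2\leq\sup_{\sigma(A)}|r|$ and $\|r(B)^{-1}\|_2\leq\sup_{\sigma(B)}|r|^{-1}$ by the $K$-spectral estimates, which produces the factor $K^2$. Your explicit check that $r$ and $1/r$ are bounded on $E$ and $F$ respectively is a slightly fuller justification than the paper gives (it appeals to the Schur decomposition for invertibility), but there is no substantive difference.
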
 
\begin{proof} 
It is only the first inequality in~\eqref{eq:NormalAssumptionEmployed} of the proof of Theorem~\ref{thm:ratio} that requires 
$A$ and $B$ to be normal matrices. When $A$ is not a normal matrix, then the inequality $\|r(A)\|_2\leq \sup_{z\in\sigma(A)} |r(z)|$ may not hold. Instead, we replace it by the $K$-spectral set bound given by $\|r(A)\|_2 \leq K \|r\|_E$. 
Note that since $p(z)$ and $q(x)$ are not zero on $F$ and $E$, respectively, one can show via the Schur decomposition that $p(B)$ and $q(A)$ are invertible matrices.
\end{proof}

Theorem~\ref{thm:ratio} and Corollary~\ref{cor:ratioExtension} provide bounds on the singular values of $X$ in terms of 
Zolotarev numbers. Therefore, to derive analytic bounds on the singular values of matrices with displacement structure, 
we must now calculate explicit bounds on Zolotarev numbers --- a topic that fortunately is extensively studied. 

\section{Zolotarev numbers} \label{sec:Zolotarev}
In this section, we derive explicit lower and upper bounds for the Zolotarev numbers
\[
Z_k := Z_k([-b,-a],[a,b]), \qquad 0<a<b<\infty,
\]
which we use in Sections~\ref{sec:PickCauchyLoewnerMatrices} and~\ref{sec:HankelMatrices}.
The sharpest bounds that we are aware of in the literature take the form
\begin{equation} \label{eq:Gonchar_Braess1}
    \rho^{-2k} \leq Z_k \leq 16 \, \rho^{-2k},
\end{equation}
see~\cite[Theorem~1]{Goncar_69_01} for the lower bound, and~\cite[Eqn.~(2.3)]{braess3} for the upper bound.\footnote{See also \cite[Eqn.~(A1)]{braess2} and~\cite[proof~of Thm. 6.6]{BB_Reichel} for the related problem of minimal Blaschke products, and see~\cite[Theorem~V.5.5]{braess1} for how to deal with rational functions with different degree constraints.} 
There are also bounds obtained directly from an infinite product formula for $\sqrt{Z_k}$~\cite[(1.11)]{Lebedev_77_01}; unfortunately, the original product formula in~\cite[(1.11)]{Lebedev_77_01} contains typos and the erroneous formula has been copied elsewhere, for example,~\cite[(4.1)]{Guttel_14_01},~\cite[(3.17)]{Medovikov_05_01}, and~\cite[Sec.~4]{Oseledets_07_01}.\footnote{As one consequence of the erroneous formula in~\cite[(1.11)]{Lebedev_77_01}, a claimed lower bound in~\cite[(3.17)]{Medovikov_05_01} and~\cite[(1.12)]{Lebedev_77_01} is accidently an upper bound.  Unfortunately, the lower bound in~\cite[(15)]{Oseledets_07_01} also appears to be in error. (See Appendix~\ref{appendix:correction} for more details.)} We prove a corrected infinite product formula in Theorem~\ref{thm:Zproduct} and further discuss the typos in Appendix~\ref{appendix:correction}.

The value of $\rho$ in~\eqref{eq:Gonchar_Braess1} is related to the logarithmic capacity of a condenser with plates $[-b,-a]$ and $[a,b]$:
\begin{equation} \label{eq:Gonchar_Braess2}
    \rho^2=\exp\left(\frac{1}{{\rm cap}([-b,-a],[a,b])}\right),
    \qquad
    \rho=\exp\left(\frac{\pi^2}{2\mu(a/b)}\right),
\end{equation}
where $\mu(\lambda) = \tfrac{\pi}{2} K(\sqrt{1-\lambda^2})/K(\lambda)$ is the Gr\"{o}tzsch ring function, and  $K$ is the complete elliptic integral of the first kind~\cite[(19.2.8)]{NISTHandbook}
\[
       K(\lambda) = \int_0^1 \frac{1}{\sqrt{(1-t^2)(1-\lambda^2t^2)}}dt,\qquad 0\leq \lambda\leq 1.
\]
The bounds in~\eqref{eq:Gonchar_Braess1} are not asymptotically sharp, and in Corollary~\ref{cor:Zbound} we show that the constant of ``$16$" in the upper bound can be replaced by ``$4$". For a proof of this sharper upper bound, we first return to the work of Lebedev~\cite{Lebedev_77_01} and derive a corrected infinite product formula for $Z_k$.

\begin{theorem}\label{thm:Zproduct} 
Let $k\geq 1$ be an integer and $0<a<b<\infty$. Then for $Z_k := Z_k([-b,-a],[a,b])$ we have
\[
Z_k = 4\rho^{-2k}\prod_{\tau=1}^\infty \frac{(1+\rho^{-8\tau k})^4}{(1+\rho^{4k}\rho^{-8\tau k})^4}, \qquad \rho=\exp\left(\frac{\pi^2}{2\mu(a/b)}\right),
\]
where $\mu(\cdot)$ is the Gr\"{o}tzsch ring function.
\end{theorem}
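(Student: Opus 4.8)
The plan is to derive the formula for $Z_k$ from the classical explicit solution of the Zolotarev problem on two real intervals in terms of elliptic functions, found in Akhiezer~\cite[Sec.~50]{Akh}, and then to convert that solution into an infinite product via the standard product expansions of the Jacobi theta functions. First I would recall that after a M\"obius (in fact linear) rescaling one may assume the intervals are $[-1,-\ell]$ and $[\ell,1]$ with $\ell = a/b$; since $Z_k$ is M\"obius invariant this does not change the quantity. For these symmetric intervals, the extremal rational function $r$ of degree $k$ is known explicitly: it is built from the Jacobi elliptic function $\mathrm{sn}$ with modulus determined by $\ell$, and the extremal ratio $Z_k$ equals a quotient of theta-function values (equivalently, it can be written using the descending Landen/Gauss transformation, which is exactly the mechanism that produces a \emph{squared} small parameter and explains the $\rho^{-2k}$ leading behaviour rather than $\rho^{-k}$). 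The key identity I would isolate is a clean closed form
\[
Z_k = \left(\frac{\vartheta_2(0\mid \tilde q)}{\vartheta_3(0\mid \tilde q)}\right)^{2}\Bigg/ (\text{normalising factor}),
\]
or more precisely an expression in which $Z_k$ is a ratio of theta functions evaluated at nome $\tilde q$, where $\tilde q$ is the nome associated to the modulus of the \emph{transformed} problem after $k$-fold Landen steps; one checks that $\tilde q = \rho^{-2k}$ with $\rho$ as in~\eqref{eq:Gonchar_Braess2}, i.e.\ $\rho = \exp(\pi^2/(2\mu(\ell)))$, using the relation $\mu(\ell) = \tfrac{\pi}{2}K'(\ell)/K(\ell)$ and the definition of the nome $q = \exp(-\pi K'/K)$.

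Next I would plug the Jacobi triple-product expansions
\[
\vartheta_2(0\mid q) = 2q^{1/4}\prod_{\tau\geq 1}(1-q^{2\tau})(1+q^{2\tau})^2,\qquad
\vartheta_3(0\mid q) = \prod_{\tau\geq 1}(1-q^{2\tau})(1+q^{2\tau-1})^2,
\]
(and similarly $\vartheta_4$) into the closed form for $Z_k$, with the nome taken to be an appropriate power of $\rho^{-2k}$. After cancelling the $\prod(1-q^{2\tau})$ factors between numerator and denominator and collecting the leading $q^{1/4}$-type prefactors into the explicit constant, the two infinite products that survive are exactly $\prod_\tau (1+q^{2\tau})$-type and $\prod_\tau(1+q^{2\tau-1})$-type products; re-indexing so that the half-integer powers become $\rho^{4k}\rho^{-8\tau k}$ and the integer powers become $\rho^{-8\tau k}$, and noting that the relevant combination appears to the fourth power (two theta-functions squared, or equivalently the square coming from the Landen doubling together with the square in the triple product), yields precisely
\[
Z_k = 4\rho^{-2k}\prod_{\tau=1}^{\infty}\frac{(1+\rho^{-8\tau k})^4}{(1+\rho^{4k}\rho^{-8\tau k})^4}.
\]
The factor $4$ and the leading $\rho^{-2k}$ must be tracked carefully through the $q^{1/4}$ prefactors of $\vartheta_2$ and through the modulus-of-the-transformed-problem bookkeeping; getting this constant exactly right (rather than $16$, as in the corrupted Lebedev formula) is the whole point of the theorem.

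The main obstacle, and the reason this is stated as a theorem rather than quoted, is the bookkeeping in the Landen/Gauss transformation: one has to be scrupulous about \emph{which} modulus and \emph{which} nome appear at each of the $k$ transformation steps, because a single misplaced factor of two in an exponent is exactly the typo in~\cite[(1.11)]{Lebedev_77_01} that propagated through the literature. I would therefore verify the final product formula independently against the known endpoint data: it must reduce to $Z_0 = 1$ when $k=0$ (the product telescopes to cancel the leading $4\rho^0$ — here one uses that at $k=0$ the ``$\rho^{-2k}$'' scaling degenerates and the product formula should be read in the limiting sense), it must satisfy the submultiplicativity $Z_{k_1+k_2}\le Z_{k_1}Z_{k_2}$, and most importantly, since each factor in the product lies in $(0,1)$, it immediately gives the two-sided estimate $\rho^{-2k}\cdot 4\prod(\cdots) \le Z_k \le 4\rho^{-2k}$, which is Corollary~\ref{cor:Zbound} and which must be consistent with the Gon\v{c}ar lower bound $\rho^{-2k}\le Z_k$ in~\eqref{eq:Gonchar_Braess1} — a consistency check that would catch any remaining sign or constant error.
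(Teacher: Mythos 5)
Your overall strategy --- Akhiezer's explicit elliptic-function solution of the Zolotarev problem combined with the Jacobi theta-function product expansions --- is the same as the paper's, but your proposal leaves as a placeholder the one step that actually constitutes the proof. The ``normalising factor'' in your claimed closed form, and the identification of the correct nome $\tilde q$, are precisely the content of the theorem. The paper pins these down as follows: Akhiezer gives the error of best rational approximation to the sign function as $2\sqrt{Z_k}/(1+Z_k)=(1-\lambda_k)/(1+\lambda_k)$ with $k\mu(\lambda_k)=\mu(a/b)$; applying Gauss' and Landen's transformations \eqref{eq:GaussTransform} together with the complementary-modulus identity $\mu(\lambda)\,\mu(\sqrt{1-\lambda^2})=\pi^2/4$ yields the clean statement $\mu(Z_k)=\pi^2k/\mu(a/b)$, i.e.\ $Z_k$ \emph{itself} is the elliptic modulus whose nome is $q=e^{-2\mu(Z_k)}=\rho^{-4k}$. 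Consequently $Z_k=(\theta_2(0,q)/\theta_3(0,q))^2$ with \emph{no} normalising factor, and the product formula \eqref{eq:rhoProduct} gives the result in one line. Your proposal never derives such an identity, so the ``bookkeeping through the Landen steps'' that you rightly identify as the danger point is not actually carried out.

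Moreover, your stated nome $\tilde q=\rho^{-2k}$ is wrong: matching $4\sqrt{q}\prod_{\tau\ge 1}(1+q^{2\tau})^4/(1+q^{2\tau-1})^4$ against the asserted formula forces $q=\rho^{-4k}$, so that $\sqrt{q}=\rho^{-2k}$ and $q^{2\tau-1}=\rho^{4k}\rho^{-8\tau k}$. This is exactly the kind of factor-of-two slip in an exponent that produced the erroneous Lebedev formula the theorem is correcting, so the hedge ``an appropriate power of $\rho^{-2k}$'' cannot be left unresolved. Two smaller points: the $k=0$ consistency check is vacuous (the nome degenerates to $1$ and the product diverges; the theorem is stated only for $k\ge 1$), and the lower bound you extract, $4\rho^{-2k}\prod(\cdots)\le Z_k$, is an identity rather than an estimate --- the usable lower bound in Corollary~\ref{cor:Zbound} requires separately bounding the infinite product from below, as the paper does via the inequality $(1+\rho^{-4k}\rho^{-8\tau k})^2\le(1+\rho^{-8\tau k})^2\le(1+\rho^{-4k}\rho^{-8\tau k})(1+\rho^{4k}\rho^{-8\tau k})$.
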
 
\begin{proof} 
We start by establishing a product formula for the inverse of $\mu$ that is apparently not widely known. For $\kappa \in (0,1)$ set
$q = {\rm exp}(-2\mu(\kappa))$. Since $\mu(\kappa) = \tfrac{\pi}{2} K\!\left(\sqrt{1-\kappa^2}\right)/K(\kappa)$, we have
that $q = {\rm exp}(-\pi K\!\left(\sqrt{1-\kappa^2}\right)/K(\kappa))$ and from~\cite[(22.2.2)]{NISTHandbook} we obtain
\begin{equation}
\kappa = \left(\frac{\theta_2(0,q)}{\theta_3(0,q)}\right)^{2} = 4\sqrt{q}\prod_{\tau=1}^\infty\frac{(1+q^{2\tau})^4}{(1+q^{2\tau-1})^4}, \quad q = q(\kappa)={\rm exp}\!\left(\!-2\mu(\kappa)\right).
\label{eq:rhoProduct} 
\end{equation} 
Here, $\theta_2(z,q)$ and $\theta_3(z,q)$ are the classical theta functions~\cite[(20.2.2) \& (20.2.3)]{NISTHandbook} and the second equality above is derived from the infinite product formula in~\cite[(20.4.3) \& (20.4.4)]{NISTHandbook}.

In order to deduce an explicit product formula for $Z_k$, we first note that the value of $2\sqrt{Z_{k}}/(1+Z_{k})$ is extensively reviewed by Akhiezer,\footnote{There is a typo in~\cite[Tab.~1 \& 2, p.~150, No.~7 \& 8]{Akh}. There should be no prime on $\lambda_1$.} see~\cite[Sec.~51]{Akh},~\cite[Tab.~1 \& 2, p.~150, No.~7 \& 8]{Akh}, and~\cite[Tab.~XXIII]{Akh}.   This value is equal to~\cite[p.~149]{Akh}, for some $\lambda_k\in(0,1)$,
 \[
\frac{2\sqrt{Z_k}}{1+Z_k} = \frac{1-\lambda_k}{1+\lambda_k}, \qquad k\mu(\lambda_k) = \mu(a/b).
\]
Here, there is a unique $\lambda_k\in(0,1)$ since the Gr\"{o}tzsch ring function $\mu:[0,1]\rightarrow[0,\infty]$ is a strictly decreasing bijection.  Next, we 
recall that Gauss' transformation~\cite[Tab.~XXI]{Akh} and Landen's transformation~\cite[Tab.~XX]{Akh} are given by
\begin{equation}
\mu\!\left(\frac{2\sqrt{\lambda}}{1+\lambda}\right) = \frac{\mu(\lambda)}{2}, \qquad \mu\!\left(\frac{1-\lambda}{1+\lambda}\right) = 2 \mu\!\left(\!\sqrt{1-\lambda^2}\right),\qquad \lambda\in(0,1),
\label{eq:GaussTransform}
\end{equation}
   from which we conclude that
   \begin{equation} \label{eq:LandenTransformation2}
       \mu(Z_k) = 2 \mu\!\left(\frac{2\sqrt{Z_k}}{1+Z_k}\right) = 4 \mu\!\left(\sqrt{1-\lambda_k^2}\right) = \frac{\pi^2}{\mu(\lambda_k)} = \frac{\pi^2 k}{\mu(a/b)}.
   \end{equation}
Therefore, from~\eqref{eq:LandenTransformation2} we have 
\[
q = q(Z_k) = e^{-2\mu(Z_k)} = {\rm exp}\left(-2k\frac{\pi^2}{\mu(a/b)}\right) = \rho^{-4k},
\]
where $\rho$ is given in~\eqref{eq:Gonchar_Braess2}. The infinite product formula for $Z_k$ follows by setting $\kappa = Z_k$ and $q = \rho^{-4k}$ in~\eqref{eq:rhoProduct}. 
\end{proof} 

The infinite product in Theorem~\ref{thm:Zproduct} can be estimated by observing that 
$(1+\rho^{-4k} \rho^{-8\tau k})^2 \leq (1+ \rho^{-8\tau k})^2 \leq (1+\rho^{-4k} \rho^{-8\tau k}) (1+\rho^{4k} \rho^{-8\tau k})$ for all $\tau\geq 1$. 
This leads to the following simple upper and lower bounds which are sufficient for the purpose of our paper.

\begin{corollary} 
Let $k\geq 1$ be an integer and $0<a<b<\infty$. Then for $Z_k := Z_k([-b,-a],[a,b])$ we have
\[
\frac{4\rho^{-2k}} {(1+\rho^{-4k})^4} \leq Z_k \leq \frac{4\rho^{-2k}}{ (1+\rho^{-4k})^2} \leq 4\rho^{-2k}, \qquad \rho=\exp\left(\frac{\pi^2}{2\mu(a/b)}\right),
\]
where $\mu(\cdot)$ is the Gr\"{o}tzsch ring function.
\label{cor:Zbound}
\end{corollary}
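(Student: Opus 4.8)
The plan is to start from the exact infinite product formula for $Z_k$ proved in Theorem~\ref{thm:Zproduct}, namely
\[
Z_k = 4\rho^{-2k}\prod_{\tau=1}^\infty \frac{(1+\rho^{-8\tau k})^4}{(1+\rho^{4k}\rho^{-8\tau k})^4},
\]
and to bound the single product factor for each $\tau\ge 1$ both from above and from below, using only elementary inequalities on positive reals. Write $s = \rho^{-4k}\in(0,1)$ and $t_\tau = \rho^{-8\tau k} = s^{2\tau}$, so the $\tau$th factor equals $\bigl((1+t_\tau)/(1+s^{-1}t_\tau)\bigr)^4$; note also $s^{-1}t_\tau = \rho^{4k}\rho^{-8\tau k}$ and $s\,t_\tau^{?}$... more cleanly, the hint in the paragraph preceding the corollary is the chain
\[
(1+\rho^{-4k}\rho^{-8\tau k})^2 \;\le\; (1+\rho^{-8\tau k})^2 \;\le\; (1+\rho^{-4k}\rho^{-8\tau k})(1+\rho^{4k}\rho^{-8\tau k}),
\]
valid for all $\tau\ge 1$ because $\rho^{-4k}\le 1\le\rho^{4k}$ (the left inequality) and because $(1+x)^2\le(1+x/c)(1+cx)$ for $c\ge 1$, $x\ge0$ — the latter being equivalent to $0\le (c-1)^2 x/c$ after expanding — applied with $x=\rho^{-8\tau k}\rho^{-4k}$ and $c=\rho^{8k}$, so that $x/c=\rho^{-8\tau k}\rho^{-12k}\le\rho^{-8\tau k}$ when... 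I should double-check the exact substitution, but the point is that both inequalities are one-line verifications on nonnegative numbers.

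Next I would telescope. Raising the displayed chain to the fourth power and taking the ratio that appears in $Z_k$: the upper bound replaces each numerator $(1+\rho^{-8\tau k})^4$ by $(1+\rho^{-4k}\rho^{-8\tau k})^2(1+\rho^{4k}\rho^{-8\tau k})^2$, giving
\[
\prod_{\tau=1}^\infty \frac{(1+\rho^{-8\tau k})^4}{(1+\rho^{4k}\rho^{-8\tau k})^4}
\;\le\;
\prod_{\tau=1}^\infty \frac{(1+\rho^{-4k}\rho^{-8\tau k})^2}{(1+\rho^{4k}\rho^{-8\tau k})^2}.
\]
Now observe the shift identity $\rho^{4k}\rho^{-8\tau k} = \rho^{-4k}\rho^{-8(\tau-1)k}$, so the product on the right is a telescoping product in the index $\tau$: the factor indexed by $\tau$ in the numerator matches the factor indexed by $\tau+1$ in the denominator. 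Everything cancels except the $\tau=1$ denominator term, leaving $1/(1+\rho^{4k-8k})^2 = 1/(1+\rho^{-4k})^2$. (One must check convergence of the tail so the telescoping is legitimate — the general factor is $1+O(\rho^{-8\tau k})$, and $\sum_\tau \rho^{-8\tau k}<\infty$ since $\rho>1$, so the product converges absolutely and the rearrangement is valid.) Multiplying by the $4\rho^{-2k}$ prefactor yields $Z_k\le 4\rho^{-2k}/(1+\rho^{-4k})^2$, and then $\le 4\rho^{-2k}$ trivially since $(1+\rho^{-4k})^2\ge 1$. For the lower bound, use the other half of the chain: replace each numerator $(1+\rho^{-8\tau k})^4$ by the smaller $(1+\rho^{-4k}\rho^{-8\tau k})^4$, so
\[
\prod_{\tau=1}^\infty \frac{(1+\rho^{-8\tau k})^4}{(1+\rho^{4k}\rho^{-8\tau k})^4}
\;\ge\;
\prod_{\tau=1}^\infty \frac{(1+\rho^{-4k}\rho^{-8\tau k})^4}{(1+\rho^{4k}\rho^{-8\tau k})^4},
\]
and the same shift-and-telescope argument — now the numerator factor at $\tau$ cancels the denominator factor at $\tau+1$, leaving $1/(1+\rho^{-4k})^4$ — gives $Z_k\ge 4\rho^{-2k}/(1+\rho^{-4k})^4$.

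The only genuine subtlety, and hence the step I would be most careful about, is making the telescoping rigorous: one cannot blindly cancel in an infinite product, so I would first establish that both $\prod_\tau(1+\rho^{-8\tau k})$ and $\prod_\tau(1+\rho^{4k}\rho^{-8\tau k})$ converge (absolutely), which is immediate from $\rho>1$ and $\sum_\tau \rho^{-8\tau k}<\infty$, and then pass to partial products: for the finite product up to $\tau=T$ the cancellation is exact and leaves boundary terms at $\tau=1$ and $\tau=T+1$, and the latter tends to $1$ as $T\to\infty$. Everything else is algebra on nonnegative real numbers. So the corollary follows directly, with the stated prefactor $\rho = \exp(\pi^2/(2\mu(a/b)))$ inherited verbatim from Theorem~\ref{thm:Zproduct}.
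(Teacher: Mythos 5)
Your proposal is correct and follows the paper's own route: it starts from the exact product formula of Theorem~\ref{thm:Zproduct}, uses the elementary chain $(1+\rho^{-4k}\rho^{-8\tau k})^2 \le (1+\rho^{-8\tau k})^2 \le (1+\rho^{-4k}\rho^{-8\tau k})(1+\rho^{4k}\rho^{-8\tau k})$ stated just before the corollary, and telescopes, which is precisely the argument the paper intends (and leaves implicit). The only blemish is the substitution you half-wrote for the right-hand inequality --- it should be $x=\rho^{-8\tau k}$ and $c=\rho^{4k}$, not $x=\rho^{-4k}\rho^{-8\tau k}$ and $c=\rho^{8k}$ --- but you flagged this yourself and the inequality, the shift identity, and both telescoping limits are all verified correctly.
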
 

Corollary~\ref{cor:Zbound} shows that $Z_k\leq 4\rho^{-2k}$ is an asymptotically sharp upper bound in the sense that the geometric decay 
rate and the constant ``4" cannot be improved if one hopes for the bound to hold for all $k$. However, this does not necessarily imply that our derived 
singular value inequalities are asymptotically sharp. On the contrary, they are usually not. For asymptotically sharp singular value bounds, we expect that one must consider 
discrete Zolotarev numbers, i.e., $Z_k(\sigma(A),\sigma(B))$ in Theorem~\ref{thm:ratio}, which are more subtle to bound and are outside the scope of this paper.  

We often prefer the following slightly weaker bound that does not contain the Gr\"{o}tzsch ring function: 
\[
Z_k([-b,-a],[a,b]) \leq 4\left[\exp\left(\frac{\pi^2}{2\log(4b/a)}\right)\right]^{-2k},\qquad 0<a<b<\infty,
\]
which is obtained by using the bound $\mu(\lambda)\leq \log((2(1+\sqrt{1-\lambda^2}))/\lambda \leq \log(4/\lambda)$, see~\cite[(19.9.5)]{NISTHandbook}.   This makes our final bounds on the singular values 
and $\epsilon$-rank of matrices with displacement rank more intuitive to those readers that are less 
familiar with the Gr\"{o}tzsch ring function.

Later, in Section~\ref{sec:HankelMatrices} we will need to use properties of an extremal rational function for $Z_k=Z_k([-b,-a],[a,b])$ and we proof them now. 
Zolotarev~\cite{Zolotarev} studied the value $Z_k$ and gave an explicit expression for the extremal function for $Z_k$ (see~\eqref{eq:zolotarev}) by 
showing an equivalence to the problem of best rational approximation of the sign function on $[-b,-a]\cup[a,b]$.  We now repeat this to derive the desired properties 
of the extremal rational function. 
\begin{theorem} 
Let $k\geq 1$ be an integer and $0<a<b<\infty$.  There exists an extremal function $R \in \mathcal{R}_{k,k}$ for $Z_k=Z_k([-b,-a],[a,b])$ such that 
\begin{enumerate}[label=(\alph*)]
\item For $z\in [-b,-a]$, we have $-\sqrt{Z_k}\leq R(z) \leq \sqrt{Z_k}$, 
\item For $z\in\mathbb{C}$, we have $R(-z)=1/R(z)$, and 
\item For $z\in \mathbb{R}$, we have $|R(iz)|=1$. 
\end{enumerate} 
\label{thm:ZextremalRational} 
\end{theorem}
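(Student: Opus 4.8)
The plan is to build the required $R$ by symmetrization, exploiting the two symmetries of the condenser $(E,F)=([-b,-a],[a,b])$: it is invariant under complex conjugation $z\mapsto\bar z$, while the map $z\mapsto -z$ interchanges $E$ and $F$. The one external ingredient I would invoke is the classical fact (Zolotarev, see~\cite[Sec.~50]{Akh}) that an extremal function for $Z_k$ exists in $\mathcal R_{k,k}$ and is unique up to a nonzero multiplicative constant; since the extremal ratio equals $Z_k\le Z_0=1<\infty$, any extremal $R_0$ has no pole on $E$ and no zero on $F$ (otherwise the ratio would be infinite).

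First I would arrange for real coefficients. If $R_0$ is extremal then so is $z\mapsto\overline{R_0(\bar z)}$, since $E,F$ are conjugation-symmetric and the ratio $\sup_E|\cdot|/\inf_F|\cdot|$ is unchanged; by uniqueness $\overline{R_0(\bar z)}=\alpha R_0(z)$ for some $\alpha\in\mathbb C$, and evaluating at a real point that is neither a zero nor a pole gives $|\alpha|=1$, so after multiplying $R_0$ by a unimodular $\beta$ with $\beta^2=\alpha$ I may assume $\overline{R_0(\bar z)}=R_0(z)$, i.e.\ $R_0$ has real coefficients. Likewise, because $z\mapsto -z$ swaps $E$ and $F$, the function $z\mapsto 1/R_0(-z)$ lies in $\mathcal R_{k,k}$ and has the same ratio as $R_0$, so it is again extremal; uniqueness then gives $1/R_0(-z)=c\,R_0(z)$ with $c$ a real constant (both sides have real coefficients), i.e.\ $R_0(z)R_0(-z)\equiv d:=1/c$. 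To fix $d$ I let $z\to\infty$ along $\mathbb R$: $R_0(z)$ tends to a limit $L$, necessarily a finite nonzero real (the cases $L\in\{0,\infty\}$ would force $d\in\{0,\infty\}$), hence $d=L^2>0$. Then $R:=R_0/\sqrt{d}$ is still extremal with real coefficients and satisfies $R(z)R(-z)=1$, which is property (b).

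Properties (a) and (c) then follow quickly. From (b), $\sup_{z\in E}|R(z)|=\sup_{z\in E}1/|R(-z)|=1/\inf_{z\in F}|R(z)|$, so $Z_k=\sup_{E}|R|/\inf_{F}|R|=\big(\sup_{E}|R|\big)^2$; hence $\sup_{E}|R|=\sqrt{Z_k}$, and since $R$ is real and pole-free on $E$ this is precisely (a). For (c), the real coefficients give $\overline{R(w)}=R(\bar w)$ for all $w$; taking $w=iz$ with $z\in\mathbb R$ and applying (b) yields $\overline{R(iz)}=R(-iz)=1/R(iz)$, so $|R(iz)|^2=1$ (and one checks $R$ has no zero or pole on $i\mathbb R$, using (b) together with the conjugate-symmetry of its zeros and poles).

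The main obstacle is not any single computation but making sure the uniqueness-up-to-scalar input is legitimately available; the paper ``repeats'' Zolotarev's construction instead, and an equally viable plan is to carry the condenser by a M\"obius map to the symmetric form $([-1,-\lambda],[\lambda,1])$ and then write the extremal $R$ explicitly as a ratio of shifted Jacobi $\mathrm{sn}$ values, from which (b) and hence (c) are immediate and (a) is the equioscillation of $R$ on $E$. On either route, the only genuinely computational step is pinning down the normalizing constant so that $R(-z)=1/R(z)$ holds exactly, which the behavior at $z=\infty$ settles.
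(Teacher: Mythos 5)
Your argument is correct in structure, but it takes a genuinely different route from the paper. You symmetrize an abstract extremal function: conjugation symmetry of the condenser plus uniqueness-up-to-scalar gives real coefficients, the swap $z\mapsto -z$ plus uniqueness gives $R_0(z)R_0(-z)\equiv d$, and the normalization $R=R_0/\sqrt{d}$ (with $d=L^2>0$ fixed by the behaviour at infinity) yields (b), from which your derivations of (a) (via $\sup_E|R|=1/\inf_F|R|$, hence $\sup_E|R|=\sqrt{Z_k}$) and (c) (via $\overline{R(iz)}=R(-iz)=1/R(iz)$, with the zero/pole-free check on $i\mathbb{R}$) are sound. The paper instead never invokes uniqueness: it takes Akhiezer's explicit best approximant $\tilde r$ to the sign function on $[-b,-a]\cup[a,b]$, whose formula makes it odd, real on $\mathbb{R}$ and purely imaginary on $i\mathbb{R}$, and then passes to $R=\bigl(1+\tfrac{1+Z_k}{1-Z_k}\tilde r\bigr)/\bigl(1-\tfrac{1+Z_k}{1-Z_k}\tilde r\bigr)$; properties (b) and (c) are read off from these symmetries, (a) comes from the equioscillation band of $\tilde r$ on $[-b,-a]$, and extremality is verified directly by computing the ratio. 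What your route buys is brevity and independence from elliptic-function formulas; what it costs is the uniqueness-up-to-scalar input, which is the real crux: Akhiezer's Section 50 solves the problem explicitly rather than stating abstract uniqueness, and equioscillation-based uniqueness on a union of two intervals is less standard than on a single interval, so you would need to either cite a precise uniqueness statement for Zolotarev's third problem (it is classical and true for two disjoint real intervals, but deserves an exact reference) or prove it — at which point one is essentially led back to the explicit construction, which is the paper's choice and also delivers the value $2\sqrt{Z_k}/(1+Z_k)$ used elsewhere in the paper.
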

\begin{proof}
We give an explicit expression for an extremal function for $Z_k$ by deriving it from the best rational approximation of the sign function on $[-b,-a]\cup [a,b]$. 
 According to \cite[Sec.~50 \& 51, p.~144, line 6]{Akh} we have
   \[
       \inf_{r\in {\mathcal R}_{k,k}} \sup_{z\in [-b,-a]\cup[a,b]}\left|{\rm sgn}(z) - r(z)\right| = \frac{2\sqrt{Z_k}}{1+Z_k},\qquad {\rm sgn}(z) = \begin{cases}1,&z\in [a,b],\\ -1,&z\in [-b,-a], \end{cases}
   \] 
   where the infimum is attained by the rational function~\cite[Sec.~51, Tab.~2, No.~7 \& 8]{Akh}
   \begin{equation}
       \tilde{r}(z) = M z \frac{\prod_{j=1}^{\lfloor (k-1)/2\rfloor} z^2+c_{2j}}{\prod_{j=1}^{\lfloor k/2\rfloor} z^2+c_{2j-1}}, \qquad c_j = a^2\frac{{\rm sn}^2(jK(\kappa)/k;\kappa)}{1-{\rm sn}^2(jK(\kappa)/k;\kappa)}.
       \label{eq:Explicit}
   \end{equation}
   Here, $M$ is a real constant selected so that ${\rm sgn}(z) - \tilde{r}(z)$ equioscillates on $[-b,-a]\cup[a,b]$, $\kappa = \sqrt{1 - (a/b)^2}$, and ${\rm sn}(\cdot)$ is the first Jacobian elliptic function.
   \begin{figure}
   \begin{minipage}{.49\textwidth}
   \begin{overpic}[width=\textwidth]{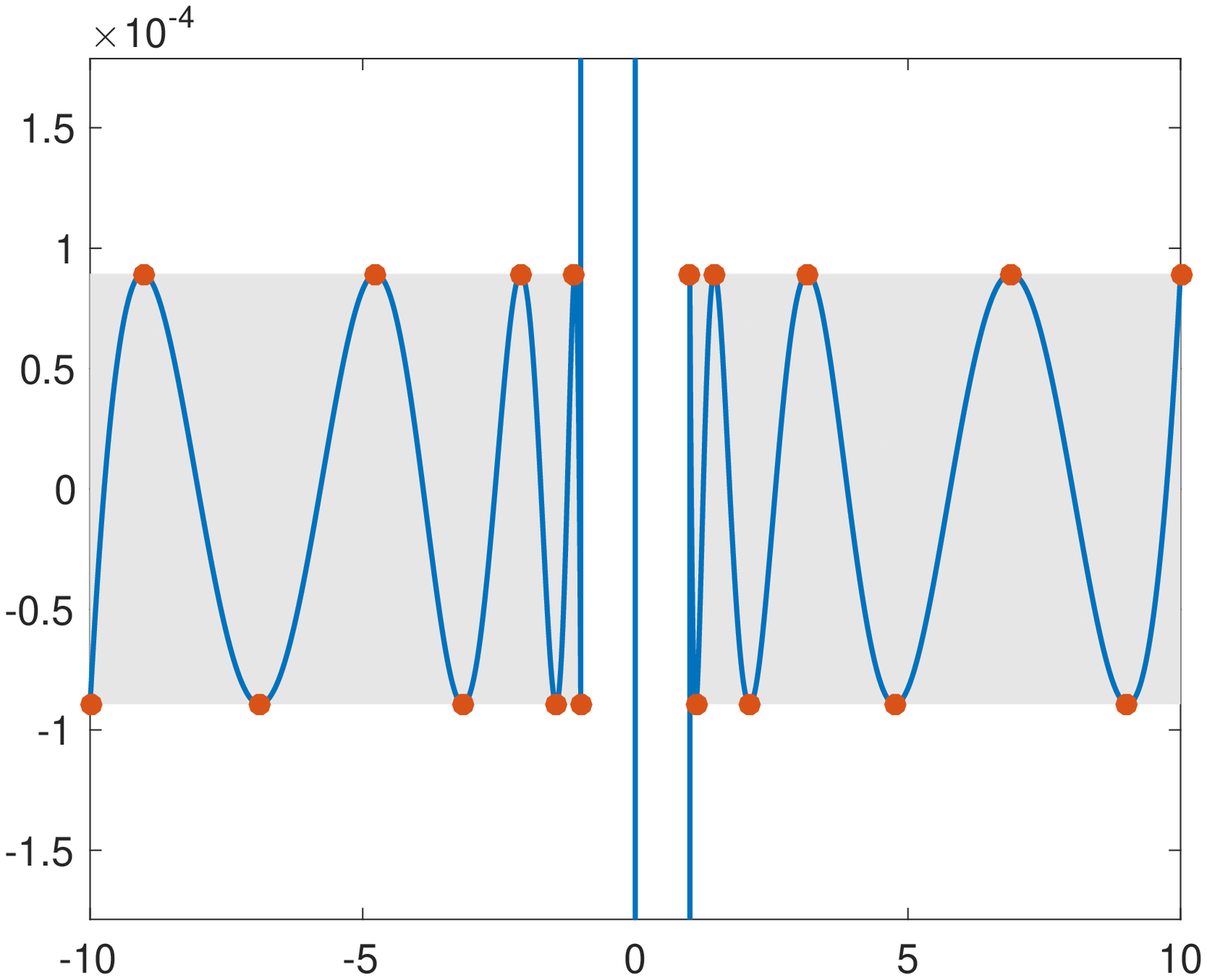}
   \put(50,0) {$x$}
   \end{overpic}
   \end{minipage}
   \begin{minipage}{.49\textwidth}
   \begin{overpic}[width=\textwidth]{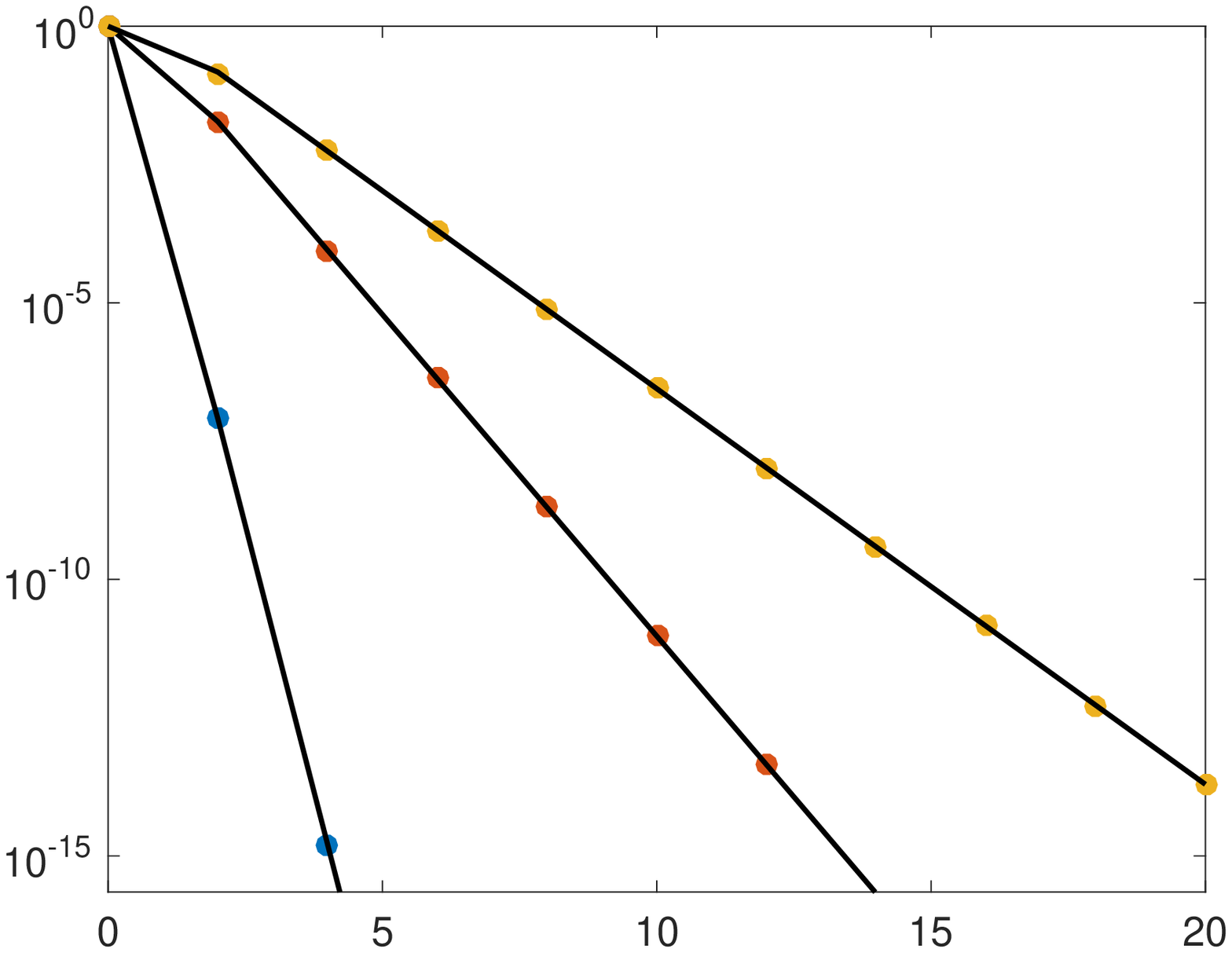}
   \put(50,0) {$k$}
   \put(25,30) {\rotatebox{-73}{$b/a = 1.1$}}
   \put(52,29) {\rotatebox{-47}{$b/a = 10$}}
   \put(67,35) {\rotatebox{-36}{$b/a = 100$}}
   \end{overpic}
   \end{minipage}
   \caption{Zolotarev's rational approximations. Left: The error between the sign function on $[-10,-1]\cup[1,10]$ and its best rational ${\mathcal R}_{8,8}$ approximation on the domain $[-10,10]$. The error equioscillates $9$ times in the interval $[-10,-1]$ and $[1,10]$ (see red dots), verifying its optimality~\cite[p.~149]{Akh}.  Right: The upper bound (black line) on $Z_k([-b,-a],[a,b])$ (colored dots) in Corollary~\ref{cor:Zbound} for $0\leq k \leq 20$, with $b/a = 1.1$ (blue), $10$ (red), $100$ (yellow).}
   \label{fig:ZolotarevBounds}
   \end{figure}
   Figure~\ref{fig:ZolotarevBounds} (left) shows the error between the sign function on $[-10,-1]\cup[1,10]$ and its best ${\mathcal R}_{8,8}$ rational approximation, which equioscillates $9$ times on $[-10,-1]$ and $[1,10]$ to confirm its optimality.
   
In order to construct an extremal function for $Z_k$ with the required properties, we observe from \eqref{eq:Explicit} that $M$ and $c_1,\ldots, c_{k-1}$ are real, and thus
   \begin{itemize}
    \item $\tilde{r}(z)$ is real-valued for $z\in \mathbb R$,
    \item $\tilde{r}(iz)$ is purely imaginary for $z\in\mathbb{R}$, and
    \item $\tilde{r}(z)$ is an odd function on $\mathbb{R}$, i.e., $\tilde{r}(z) = -\tilde{r}(-z)$ for $z\in\mathbb{R}$.
   \end{itemize}
   As a consequence, the rational function given by
\[
           R(z) = \frac{1 + \frac{1+Z_k}{1-Z_k} \tilde r(z)}{1 - \frac{1+Z_k}{1-Z_k} \tilde r(z)} \in\mathcal{R}_{k,k}
\]
   is real-valued for $z\in \mathbb R$ with $R(-z)=1/R(z)$, and of modulus $1$ on the imaginary axis. Finally, as $\tilde r(z)$ takes values in the interval
   \[
       \left[-1-\frac{2\sqrt{Z_k}}{1+Z_k}, -1+\frac{2\sqrt{Z_k}}{1+Z_k}\right]
       = \left[\frac{-(1+\sqrt{Z_k})^2}{1+Z_k},
       \frac{-(1-\sqrt{Z_k})^2}{1+Z_k}\right]
   \]
   for $z\in [-b,-a]$, we have
   \[
       \frac{1+Z_k}{1-Z_k} \tilde r(z)
       \in
       \left[-\frac{1+\sqrt{Z_k}}{1-\sqrt{Z_k}},
       -\frac{1-\sqrt{Z_k}}{1+\sqrt{Z_k}}
       \right],
   \]
   implying that $-\sqrt{Z_k}\leq R(z)\leq \sqrt{Z_k}$ for $z\in [-b,-a]$. Hence, using $R(-z)=1/R(z)$ we have 
   \[
       \frac{{\displaystyle \sup_{z\in E} \left|R(z)\right|}}{{\displaystyle \inf_{z\in F} \left|R(z)\right|}} \leq Z_k = \inf_{r\in \mathcal{R}_{k,k}}  \frac{{\displaystyle \sup_{z\in E} \left|r(z)\right|}}{{\displaystyle \inf_{z\in F} \left|r(z)\right|}},
   \]
   showing that $R$ is extremal for $Z_k([-b,-a],[a,b])$, as required.
\end{proof} 

Figure~\ref{fig:ZolotarevBounds} (right) demonstrates the upper bound in Corollary~\ref{cor:Zbound} when $b/a = 1.1$, $10$, $100$. 
In Section~\ref{sec:PickCauchyLoewnerMatrices} we combine our upper bound on the singular values in Theorem~\ref{thm:ratio} with 
our upper bound on Zolotarev numbers to derive explicit bounds on the singular values of certain Pick, Cauchy, and 
L\"{o}wner matrices. 
\section{The decay of the singular values of Pick, Cauchy, and L\"{o}wner matrices}\label{sec:PickCauchyLoewnerMatrices}
In this section we bound the singular values of Pick (see Section~\ref{sec:PickMatrices}), Cauchy (see Section~\ref{sec:CauchyMatrices}), 
and L\"{o}wner (see Section~\ref{sec:LoewnerMatrices}) matrices. In view of Theorem~\ref{thm:ratio} and Corollary~\ref{cor:Zbound}, 
our first idea is to construct matrices $A$ and $B$ so that the rank of $AX-XB$ is small with the additional hope that $\sigma(A)$ and $\sigma(B)$ are 
contained in real and disjoint intervals.   For the three classes of matrices in this section, this first idea works out under mild ``separation conditions''. In
Section~\ref{sec:HankelMatrices} the more challenging cases of Krylov, real Vandermonde, and real positive definite Hankel matrices are considered.

\subsection{Pick matrices}\label{sec:PickMatrices} 
An $n\times n$ matrix $P_n$ is called a Pick matrix if there exists a vector $\underline{s}=(s_1,\ldots,s_n)^T\in\mathbb{C}^{n\times 1}$, 
and a collection of real numbers $x_1<\cdots <x_n$ from an interval $[a,b]$ with $0<a<b<\infty$ such that 
\begin{equation}
(P_n)_{jk} = \frac{s_j+s_k}{x_j+x_k}, \qquad 1\leq j,k\leq n. 
\label{eq:PickMatrix}
\end{equation} 
All Pick matrices satisfy the following Sylvester matrix equation: 
\begin{equation}
D_{\underline{x}}P_n - P_n(-D_{\underline{x}})  = \underline{s}\,\underline{e}^T+\underline{e}\,\underline{s}^T,\qquad 
D_{\underline{x}} = {\rm diag}\left(x_1,\ldots,x_n\right),
\label{eq:PickSylvester}
\end{equation} 
where $\underline{e} = (1,\ldots,1)^T$.  Since diagonal matrices are normal 
matrices and in this case the spectrum of $D_{\underline{x}}$ is contained in $[a,b]$, we have the following bounds on
the singular values of $P_n$.
\begin{corollary} 
Let $P_n$ be the $n\times n$ Pick matrix in~\eqref{eq:PickMatrix}. Then, for $j\geq 1$ we have
\[
\sigma_{j+2k}(P_n) \leq  4\left[\exp\left(\frac{\pi^2}{2\mu(a/b)}\right)\right]^{-2k} \sigma_j(P_n),\qquad 1\leq j+2k\leq n,
\]
where $\mu(\lambda)$ is the Gr\"{o}tzsch ring function (see Section~\ref{sec:Zolotarev}). The bound remains valid, but is slightly 
weaken, if $\mu(a/b)$ is replaced by $\log(4b/a)$.  
\label{cor:PickMatrices}
\end{corollary}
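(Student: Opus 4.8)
The plan is to combine the two main ingredients already developed in the excerpt: Theorem~\ref{thm:ratio}, which bounds singular value ratios by Zolotarev numbers, and Corollary~\ref{cor:Zbound} together with the Gr\"otzsch-ring estimate $\mu(\lambda)\leq\log(4/\lambda)$, which bounds the relevant Zolotarev number. The first step is to verify the displacement identity~\eqref{eq:PickSylvester}: writing $A=D_{\underline x}$ and $B=-D_{\underline x}$, the $(j,k)$ entry of $AP_n-P_nB$ is $x_j\,(P_n)_{jk}+(P_n)_{jk}\,x_k=(x_j+x_k)\frac{s_j+s_k}{x_j+x_k}=s_j+s_k$, which is exactly the $(j,k)$ entry of $\underline s\,\underline e^T+\underline e\,\underline s^T$. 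Hence $AP_n-P_nB=MN^*$ with $M=[\underline s\ \ \underline e]\in\mathbb{C}^{n\times2}$ and $N=[\underline e\ \ \underline s]\in\mathbb{C}^{n\times 2}$ (taking conjugates appropriately, or noting $\underline e,\underline s$ may be taken real), so the $(A,B)$-displacement rank is $\displacementRank=2$.

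Next I would check the hypotheses of Theorem~\ref{thm:ratio}: $A=D_{\underline x}$ and $B=-D_{\underline x}$ are diagonal, hence normal; their spectra are $\{x_1,\dots,x_n\}\subseteq[a,b]=:F$ and $\{-x_1,\dots,-x_n\}\subseteq[-b,-a]=:E$. The two intervals $E=[-b,-a]$ and $F=[a,b]$ are closed and disjoint since $a>0$. Applying Theorem~\ref{thm:ratio} with $\displacementRank=2$ gives
\[
\sigma_{j+2k}(P_n)\leq Z_k([-b,-a],[a,b])\,\sigma_j(P_n),\qquad 1\leq j+2k\leq n.
\]
Then I would invoke Corollary~\ref{cor:Zbound}, which yields $Z_k([-b,-a],[a,b])\leq 4\rho^{-2k}$ with $\rho=\exp\bigl(\pi^2/(2\mu(a/b))\bigr)$, giving the first displayed bound of the Corollary directly. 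For the final sentence, I would apply the estimate $\mu(\lambda)\leq\log\bigl(2(1+\sqrt{1-\lambda^2})/\lambda\bigr)\leq\log(4/\lambda)$ quoted from \cite[(19.9.5)]{NISTHandbook} with $\lambda=a/b$, so that $\mu(a/b)\leq\log(4b/a)$; since $\rho$ is a decreasing function of $\mu(a/b)$ and $\rho^{-2k}$ is increasing in $\rho$ for $\rho>1$, replacing $\mu(a/b)$ by the larger quantity $\log(4b/a)$ only weakens (enlarges) the bound, which proves the last claim.

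There is essentially no serious obstacle here: this is a short deduction assembling previously proved results, and the only thing requiring a moment's care is confirming that the substitution $\mu(a/b)\leftarrow\log(4b/a)$ moves the inequality in the right direction — that is, that enlarging the denominator inside the exponent enlarges $\rho^{-2k}$ — which follows since $0<a<b$ forces $\rho>1$. One should also note in passing that $M$ and $N$ each have exactly two columns regardless of whether $\underline s$ is a multiple of $\underline e$, so $\displacementRank\le 2$ always, matching the index shift of $2k$ in the statement; a slightly sharper bound with $\displacementRank=1$ would hold in the degenerate case $\underline s=\alpha\underline e$, but this is not needed.
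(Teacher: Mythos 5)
Your proposal is correct and follows exactly the paper's route: verify the Sylvester identity~\eqref{eq:PickSylvester} with $A=D_{\underline x}$, $B=-D_{\underline x}$, $\displacementRank=2$, apply Theorem~\ref{thm:ratio} with $E=[a,b]$, $F=[-b,-a]$, and then invoke Corollary~\ref{cor:Zbound} together with $\mu(\lambda)\leq\log(4/\lambda)$. The only cosmetic remark is that you swap the roles of $E$ and $F$ relative to the theorem's convention $\sigma(A)\subseteq E$, which is harmless since $Z_k(E,F)=Z_k(F,E)$.
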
 
\begin{proof} 
From~\eqref{eq:PickSylvester}, we know that $A = D_{\underline{x}}$, $B=-A$, $\displacementRank = 2$, $E = [a,b]$, and $F=[-b,-a]$ in Theorem~\ref{thm:ratio}. 
Therefore, for $j\geq 1$ we have $\sigma_{j+2k}(P_n) \leq Z_k(E,F)\sigma_j(P_n)$, $1\leq j+2k\leq n$. The result follows 
from the upper bound in Corollary~\ref{cor:Zbound}.
\end{proof}

There are two important consequences of Corollary~\ref{cor:PickMatrices}: (1) Pick matrices are usually ill-conditioned unless $b/a$ is large 
and/or $n$ is small and (2) All Pick matrices can be approximated, up to an accuracy of $\epsilon\|X\|_2$ with $0<\epsilon<1$, 
by a rank $\mathcal{O}(\log(b/a)\log(1/\epsilon))$ matrix.  More precisely, for any Pick matrix in~\eqref{eq:PickMatrix} we have 
\begin{equation} 
\kappa_2(P_n) = \frac{\sigma_1(P_n)}{\sigma_n(P_n)} \geq \frac{1}{4}\left[\exp\left(\frac{\pi^2}{2\log(4b/a)}\right)\right]^{2\lceil \tfrac{n}{2}-1\rceil},
\label{eq:PickConditionNumber} 
\end{equation} 
where for an even integer $n$ we used $\sigma_1(P_n)/\sigma_{n}(P_n) \geq \sigma_1(P_n)/\sigma_{n-1}(P_n)$. 
Moreover, by setting $k$ to be the smallest integer so that $\sigma_{1+2k}(P_n)\leq \epsilon \sigma_1(P_n)$, we find the following bound on  
the $\epsilon$-rank of $P_n$ (see~\eqref{eq:NumericalRank}):
\begin{equation}
{\rm rank}_\epsilon(P_n) \leq 2\bigg\lceil\frac{\log(4b/a)\log(4/\epsilon)}{\pi^2}\bigg\rceil.
\label{eq:PickNumericalRank}
\end{equation} 

In both~\eqref{eq:PickConditionNumber} and~\eqref{eq:PickNumericalRank}, the bound can be slightly improved 
by replacing the $\log(4b/a)$ term by $\mu(a/b)$.  Previously, bounds on the minimum and maximum singular values 
of Pick matrices were derived under the additional assumption that $P_n$ is a positive definite matrix~\cite{fasino}. 

Figure~\ref{fig:PickCauchyMatrices} (left) demonstrates the bound in Corollary~\ref{cor:PickMatrices} on three $100\times 100$ 
Pick matrices. The black line bounding the singular values has a stepping behavior because the inequality in 
Corollary~\ref{cor:PickMatrices} for $j = 1$ only bounds odd indexed singular values of $P_n$ and to bound $\sigma_{2k}(P_n)$ we use the trivial 
inequality $\sigma_{2k}(P_n)\leq \sigma_{2k-1}(P_n)$.  At this time we can offer little insight into why the singular values of the 
tested Pick matrices also have a similar stepping behavior.
 
\begin{figure} 
\begin{minipage}{.49\textwidth} 
\begin{overpic}[width=\textwidth]{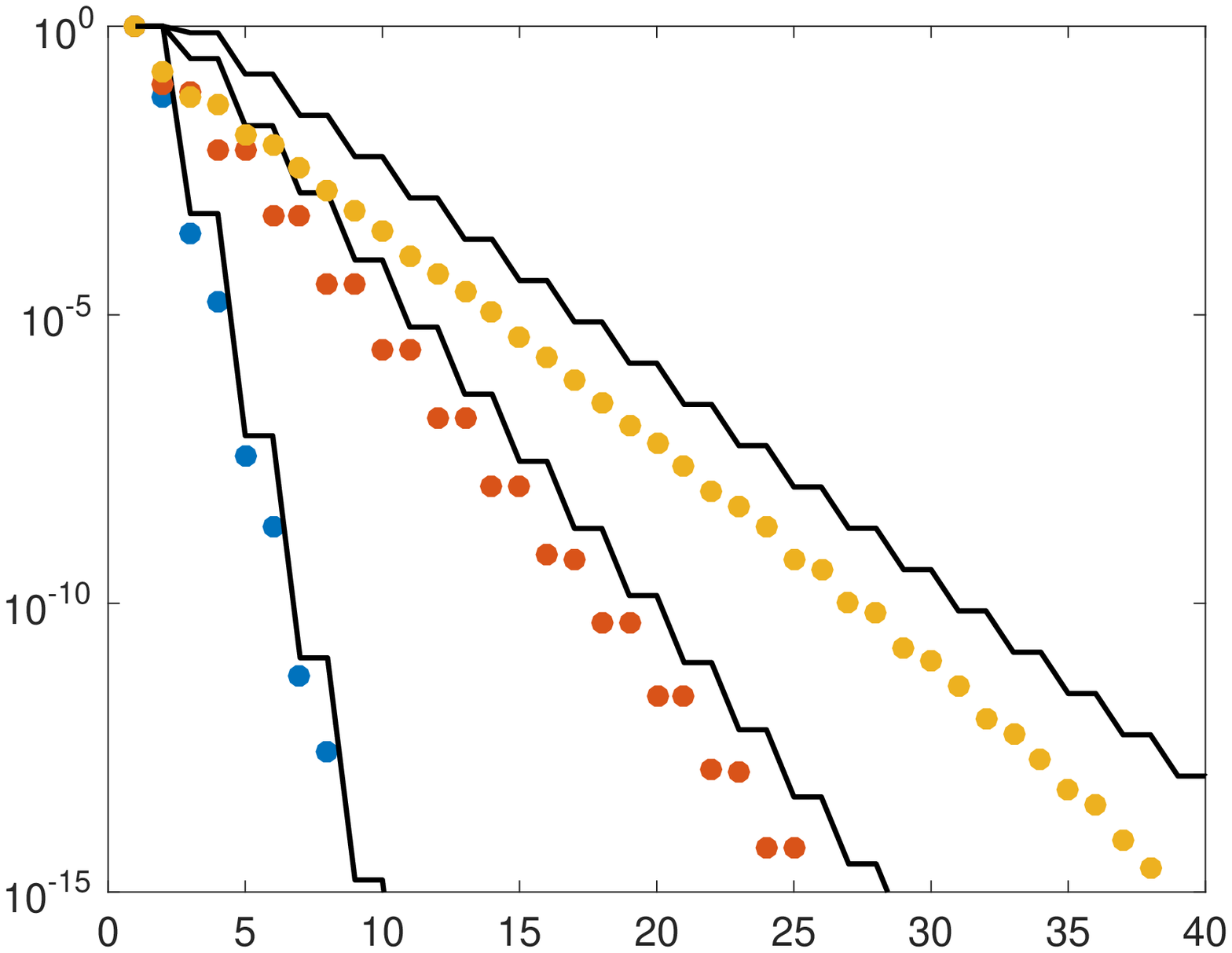}
\put(35,0) {$\sigma_j(P_n)/\sigma_1(P_n)$}
\put(28,30) {\rotatebox{-80}{$\frac{b}{a}=1.1$}}
\put(57,26) {\rotatebox{-50}{$\frac{b}{a}=10$}}
\put(70,35) {\rotatebox{-35}{$\frac{b}{a}=100$}}
\end{overpic} 
\end{minipage} 
\begin{minipage}{.49\textwidth} 
\begin{overpic}[width=\textwidth]{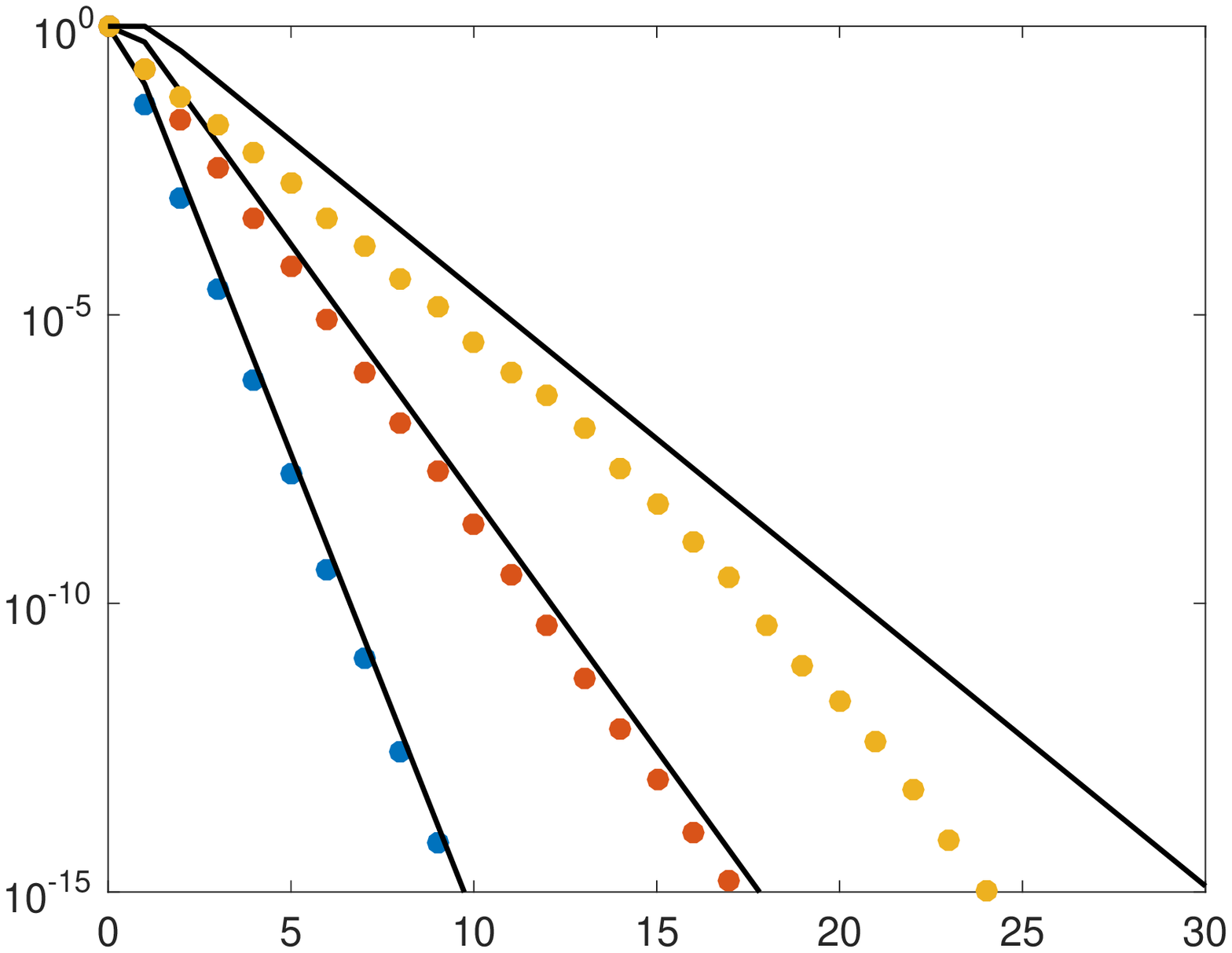}
\put(33,0) {$\sigma_j(C_{n,n})/\sigma_1(C_{n,n})$}
\put(31,30) {\rotatebox{-65}{$\gamma\approx1.492$}}
\put(42,34) {\rotatebox{-53}{$\gamma\approx8.841$}}
\put(65,33) {\rotatebox{-37}{$\gamma\approx251.46$}}
\end{overpic} 
\end{minipage} 
\caption{Left: The scaled singular values of $100\times 100$ Pick matrices (colored dots) and the bound in 
Corollary~\ref{cor:PickMatrices} (black line) for $b/a = 1.1$ (blue), $10$ (red), $100$ (yellow). In~\eqref{eq:PickMatrix}, 
$\underline{x}$ is a vector of equally spaced points in $[a,b]$ and $\underline{s}$ is a random vector with 
independent standard Gaussian entries. 
Right: The scaled singular values of $100\times 100$ Cauchy matrices (colored dots) and the bound in Corollary~\ref{cor:CauchyMatrices}  (black line) for $\gamma = 1.1, 10,100$. In~\eqref{eq:CauchyMatrix}, $\underline{x}$ is a vector of Chebyshev nodes from $[-8.5,-2]$ (blue), $[-100,-3]$ (red), and $[-101,2.8]$ (yellow), respectively, 
$\underline{y}$ is a vector of Chebyshev nodes from $[3,10]$ (blue), $[3,100]$ (red), and $[3,100]$ (yellow), respectively, and $\underline{s}$ and $\underline{t}$ are 
random vectors with independent standard Gaussian entries. The decay rate depends on the cross-ratio of the endpoints of the intervals.}
\label{fig:PickCauchyMatrices} 
\end{figure} 

\subsection{Cauchy matrices}\label{sec:CauchyMatrices}
An $m\times n$ matrix $C_{m,n}$ with $m\geq n$ is called a (generalized) Cauchy matrix if there exists vectors $\underline{s}\in\mathbb{C}^{m\times 1}$ and $\underline{t}\in\mathbb{C}^{n\times 1}$, points $x_1<\cdots <x_m$ on an interval $[a,b]$ with $-\infty<a<b<\infty$, and points $y_1<\cdots<y_n$ (all distinct from $x_1,\ldots,x_m$) in an interval 
$[c,d]$ with $-\infty<c<d<\infty$ such that
\begin{equation} 
(C_{m,n})_{jk} = \frac{s_jt_k}{x_j-y_k}, \qquad 1\leq j\leq m, \quad 1\leq k\leq n. 
\label{eq:CauchyMatrix} 
\end{equation}
Generalized Cauchy matrices satisfy the following Sylvester matrix equation: 
\begin{equation}
D_{\underline{x}}C_{m,n} -  C_{m,n} D_{\underline{y}}  = \underline{s}\,\underline{t}^T, 
\label{eq:CauchySylvester}
\end{equation} 
where $D_{\underline{x}} = {\rm diag}\left(x_1,\ldots,x_m\right)$ and $D_{\underline{y}} = {\rm diag}\left(y_1,\ldots,y_n\right)$.  

If we make the further assumption that either $b<c$ or $d<a$ so that the intervals $[a,b]$ and $[c,d]$ are disjoint, then we can 
bound the singular values of $C_{m,n}$. This ``separation condition" is an extra assumption on Cauchy matrices that simplifies the analysis. 
If the intervals $[a,b]$ and $[c,d]$ overlapped, then one would have to consider discrete Zolotarev numbers to estimate the singular values and we want to avoid this in this paper. 
\begin{corollary} 
Let $C_{m,n}$ be an $m\times n$ Cauchy matrix in~\eqref{eq:CauchyMatrix} with $m\geq n$ and either $b<c$ or $d<a$. Then, 
\[
\sigma_{j+k}(C_{m,n}) \leq 4\left[\exp\left(\frac{\pi^2}{4\mu(1/\sqrt{\gamma})}\right)\right]^{-2k}\sigma_j(C_{m,n}), \qquad 1\leq j+k\leq n, 
\]
where $\gamma$ is the absolute value of the cross-ratio\footnote{Given four collinear points $a$, $b$, $c$, and $d$ the cross-ratio is given by $(c-a)(d-b)/((c-b)(d-a))$.} of $a$, $b$, $c$, and $d$. If $a = c$ and $b = d$, then $2\mu(1/\sqrt{\gamma}) = \mu(a/b)$. The bound remains valid, but is slightly 
weaken, if $4\mu(1/\sqrt{\gamma})$ is replaced by $2\log(16\gamma)$.  
\label{cor:CauchyMatrices}
\end{corollary}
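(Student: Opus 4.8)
The plan is to use Theorem~\ref{thm:ratio} to turn the singular-value ratio into a Zolotarev number and then to reduce that Zolotarev number, by a M\"obius transformation, to one over the symmetric intervals already handled in Corollary~\ref{cor:Zbound}; the cross-ratio $\gamma$ is precisely the quantity that survives this reduction. Concretely, I would first read off from~\eqref{eq:CauchySylvester} that $A=D_{\underline x}$ and $B=D_{\underline y}$ are diagonal, hence normal, with $\sigma(A)\subseteq[a,b]$, $\sigma(B)\subseteq[c,d]$, and displacement rank $\displacementRank=1$; the separation hypothesis ($b<c$ or $d<a$) makes $[a,b]$ and $[c,d]$ disjoint closed real intervals. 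Theorem~\ref{thm:ratio} then gives $\sigma_{j+k}(C_{m,n})\le Z_k([a,b],[c,d])\,\sigma_j(C_{m,n})$ for $1\le j+k\le n$, so everything reduces to bounding $Z_k([a,b],[c,d])$.

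The step I expect to require the most care is the normalization. Assuming without loss of generality that $b<c$, I would show there is a real M\"obius transformation $T$ with $T(a)=-t$, $T(b)=-1$, $T(c)=1$, $T(d)=t$ for a suitable $t>1$: the first three conditions determine $T$, and the fourth then reads $(1+t)^2/(4t)=\gamma$ (using that the cross-ratio is a M\"obius invariant, so it equals $\gamma$ for both $4$-tuples), which has a unique solution $t>1$ because $s\mapsto(1+s)^2/(4s)$ is an increasing bijection of $(1,\infty)$ onto $(1,\infty)$ and $\gamma>1$. One checks, via the cyclic order of the four endpoints together with $\infty$, that $T$ has no pole in $[a,d]$, so it carries the segment $[a,b]$ onto $[-t,-1]$ and $[c,d]$ onto $[1,t]$. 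Since Zolotarev numbers are M\"obius invariant, $Z_k([a,b],[c,d])=Z_k([-t,-1],[1,t])$, and Corollary~\ref{cor:Zbound} bounds the right-hand side by $4\,[\exp(\pi^2/(2\mu(1/t)))]^{-2k}$.

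It then remains to rewrite $1/t$ in terms of $\gamma$. From $\gamma=(1+t)^2/(4t)$ one has $1/\sqrt{\gamma}=2\sqrt{1/t}/(1+1/t)$, so Gauss' transformation (the first identity in~\eqref{eq:GaussTransform}) gives $\mu(1/\sqrt{\gamma})=\tfrac12\mu(1/t)$, i.e.\ $\mu(1/t)=2\mu(1/\sqrt{\gamma})$; substituting this into the previous estimate yields exactly $\sigma_{j+k}(C_{m,n})\le4\,[\exp(\pi^2/(4\mu(1/\sqrt{\gamma})))]^{-2k}\sigma_j(C_{m,n})$. When $[c,d]=[-b,-a]$, i.e.\ the two intervals are reflections of each other, the same cross-ratio computation collapses to $\gamma=(a+b)^2/(4ab)$ and hence $2\mu(1/\sqrt{\gamma})=\mu(a/b)$, recovering Corollary~\ref{cor:PickMatrices}. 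Finally, the weakened form follows from the elementary bound $\mu(\kappa)\le\log(4/\kappa)$ applied with $\kappa=1/\sqrt{\gamma}$, which gives $4\mu(1/\sqrt{\gamma})\le4\log(4\sqrt{\gamma})=2\log(16\gamma)$; enlarging the denominator in the exponent only weakens (enlarges) the bound, so it remains valid.
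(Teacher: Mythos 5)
Your proposal is correct and follows essentially the same route as the paper: apply Theorem~\ref{thm:ratio} with $A=D_{\underline x}$, $B=D_{\underline y}$, $\nu=1$, use M\"obius invariance of $Z_k$ to map $[a,b]\cup[c,d]$ onto $[-\alpha,-1]\cup[1,\alpha]$ with $(\alpha+1)^2/(4\alpha)=\gamma$, invoke Corollary~\ref{cor:Zbound}, and convert $\mu(1/\alpha)$ to $2\mu(1/\sqrt{\gamma})$ via Gauss' transformation. Your extra checks (uniqueness of $\alpha>1$, absence of a pole of $T$ in $[a,d]$, and the consistency check against the Pick case) are fine and only make explicit what the paper leaves implicit.
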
 
\begin{proof} 
From~\eqref{eq:CauchySylvester}, we know that $A = D_{\underline{x}}$, $B = D_{\underline{y}}$, $\displacementRank = 1$, 
$E = [a,b]$, and $F=[c,d]$ in Theorem~\ref{thm:ratio}. Therefore, we conclude that 
$\sigma_{j+k}(C_{m,n}) \leq Z_k(E,F)\sigma_j(C_{m,n})$ for $1\leq j+k\leq n$. 

The value of $Z_k(E,F)$ is invariant under M\"{o}bius transformations. That is, if $T(z) = (a_1z+a_2)/(a_3z+a_4)$ 
is a M\"{o}bius transformation, then $Z_k(E,F)$ and $Z_k(T(E),T(F))$ are equal. Therefore, we can transplant $[a,b]\cup[c,d]$ onto 
$[-\alpha,-1]\cup[1,\alpha]$ for some $\alpha>1$ using a M\"{o}bius transformation. If $b<c$, then the transformation satisfies 
$T(a) = -\alpha$, $T(b) = -1$, $T(c)=1$, $T(d)=\alpha$.   Since $T$ is a M\"{o}bius transformation the cross-ratio of the four collinear points $a$, $b$, $c$, and $d$ equals the cross-ratio of $T(a)$, $T(b)$, $T(c)$, and $T(d)$. 
Hence, if $b<c$ or $d<a$ then we know that $\alpha$ must satisfy
\[
\frac{|c-a| |d-b|}{|c-b| |d-a|} = \frac{(\alpha+1)^2}{4\alpha}.
\]
Therefore, by solving the quadratic and noting that $\alpha> 1$ we have
\begin{equation}
\alpha = -1 + 2\gamma + 2\sqrt{\gamma^2 - \gamma}, \qquad \gamma = \frac{|c-a| |d-b|}{|c-b| |d-a|}.
\label{eq:gamma}
\end{equation} 
From Corollary~\ref{cor:Zbound}, we conclude that 
\[
\sigma_{j+k}(C_{m,n}) \leq 4\left[\exp\left(\frac{\pi^2}{2\mu(1/\alpha)}\right)\right]^{-2k}\sigma_j(C_{m,n}), \qquad 1\leq j+k\leq n.
\]
By Gauss' transformation in~\eqref{eq:GaussTransform}, we note that $\mu(1/\alpha)=2\mu( 1/\sqrt{\gamma} ) \leq 2 \log(4 \sqrt{\gamma}) = \log(16 \gamma)$ and 
the result follows. 
\end{proof}

It is interesting to observe that the decay rate of the singular values of Cauchy matrices only depends on the absolute value of the cross-ratio of $a$, $b$, $c$, and $d$. Hence, the ``separation" of two real intervals $[a,b]$ and $[c,d]$ for the purposes of singular value estimates is measured in terms of the cross-ratio of $a$, $b$, $c$, and $d$, not the 
separation distance $\max(c-b,a-d)$. 

Corollary~\ref{cor:CauchyMatrices} shows that the Cauchy matrix in~\eqref{eq:CauchyMatrix} (when $b<c$ or $d<a$)
has an $\epsilon$-rank of at most
\[
{\rm rank}_\epsilon(C_{m,n}) \leq \bigg\lceil\frac{2\mu(1/\sqrt{\gamma})\log(4/\epsilon)}{\pi^2}\bigg\rceil \leq \bigg\lceil\frac{\log(16\gamma)\log(4/\epsilon)}{\pi^2}\bigg\rceil,
\]
where $\gamma$ is absolute value of the cross-ratio of $a$, $b$, $c$, and $d$.
Bounds on the numerical rank of the Cauchy matrix have also been obtained via the Cauchy function, i.e., $1/(x+y)$ on $[a,b]\times [c,d]$, by exploiting 
the hierarchical low rank structure of $C_{m,n}$~\cite{Grasedyck_04_01} (for more details, see~\cite[Chapter 3]{Townsend_14_01}).  Furthermore, when $m = n$ (and $b<c$ or $d<a$) we have a lower bound on the condition number of $C_{n,n}$:
\[
\kappa_2(C_{n,n}) = \frac{\sigma_1(C_{n,n})}{\sigma_n(C_{n,n})} \geq \frac{1}{4}\left[\exp\left(\frac{\pi^2}{2\log(16\gamma)}\right)\right]^{2(n-1)},\qquad \gamma = \frac{|c-a| |d-b|}{|c-b| |d-a|}.
\]

Corollary~\ref{cor:CauchyMatrices} also includes the important Hilbert matrix, i.e., $(H_n)_{jk} = 1/(j+k-1)$ for $1\leq j,k\leq n$. 
By setting $x_j = j-1/2$, $y_j = -k+1/2$, and $\underline{s} = \underline{r} = (1,\ldots,1)^T$, the matrix in~\eqref{eq:CauchyMatrix}
is the Hilbert matrix. In particular, Corollary~\ref{cor:CauchyMatrices}  with $[a,b] = [-n+1/2,-1/2]$ and $[c,d] = [1/2,n-1/2]$ shows that
\begin{equation}
\sigma_{k+1}(H_n) \leq 4\left[\exp\left(\frac{\pi^2}{2\log(8n-4)}\right)\right]^{-2k}\sigma_1(H_n),\qquad 1\leq k\leq n-1. 
\label{eq:HilbertMatrix}
\end{equation} 
Therefore, the Hilbert matrix can be well-approximated by a low rank matrix and has exponentially decaying singular values.\footnote{More generally, skeleton decompositions can be used to show that the Hilbert kernel of $f(x,y) = 1/(x+y)$ on $[a,b]\times [a,b]$ with $0<a<b<\infty$ has exponentially decaying singular values~\cite{Oseledets_07_01}. Even though there is an error in~\cite[Sec.~4]{Oseledets_07_01} in the infinite product formula and the stated lower bound (see Appendix~\ref{appendix:correction}), we believe the proved upper bound in~\cite[(15)]{Oseledets_07_01} is correct.} In particular, 
it has an $\epsilon$-rank of at most $\lceil \log(8n-4)\log(4/\epsilon)/\pi^2 \rceil$.  The Hilbert matrix is an example of a real positive definite 
Hankel matrix and in Section~\ref{sec:HankelMatrices} we show that bounds similar to~\eqref{eq:HilbertMatrix} hold for the singular values 
of all such matrices. 
 
Figure~\ref{fig:PickCauchyMatrices} (right) demonstrates the bound in Corollary~\ref{cor:CauchyMatrices}  on three $n\times n$ 
Cauchy matrices, where $n = 100$. In practice, the derived bound is relatively tight for singular values $\sigma_j(C_{m,n})$ 
when $j$ is small with respect the $n$. 
 
\subsection{L\"{o}wner matrices}\label{sec:LoewnerMatrices}
An $n\times n$ matrix $L_n$ is called a L\"{o}wner matrix if there exist vectors $\underline{r},\underline{s}\in\mathbb{C}^{n\times 1}$, points $x_1<\cdots <x_n$ in $[a,b]$ with $-\infty<a<b<\infty$, and points $y_1<\cdots<y_n$ (all different from $x_1,\ldots,x_n$) in $[c,d]$ with $-\infty<c<d<\infty$ such that  
\begin{equation} 
(L_n)_{jk} = \frac{r_j-s_k}{x_j-y_k}, \qquad 1\leq j,k\leq N. 
\label{eq:LoewnerMatrix} 
\end{equation}
In the special case when $y_j=-x_j$ and $s_j = -r_j$, a L\"{o}wner matrix is a Pick matrix (see Section~\ref{sec:PickMatrices}).
L\"{o}wner matrices satisfy the Sylvester matrix equation given by 
\[
D_{\underline{x}}L_n-  L_n D_{\underline{y}} = \underline{r}\,\underline{e}^T - \underline{e}\,\underline{s}^T,
\]
where $\underline{e} = (1,\ldots,1)^T$. From Theorem~\ref{thm:ratio} we can bound the singular values 
of $L_n$ provided that $[a,b]$ and $[c,d]$ are disjoint, i.e., either $b<c$ or $d<a$.  We emphasis that the
separation condition of the intervals $[a,b]$ and $[c,d]$ if an extra assumption on a L\"{o}wner matrix that allows 
us to proceed with the methodology we have developed.  
\begin{corollary} 
Let $L_n$ be an $n\times n$ L\"{o}wner matrix in~\eqref{eq:LoewnerMatrix} with $b<c$ or $d<a$. Then, for $j\geq 1$ we have
\[
\sigma_{j+2k}(L_n) \leq 4\left[\exp\left(\frac{\pi^2}{4\mu(1/\sqrt{\gamma})}\right)\right]^{-2k}\sigma_j(L_n), \qquad 1\leq j+2k\leq n, 
\]
where $\gamma$ is the absolute value of the cross-ratio of $a$, $b$, $c$, and $d$ (see~\eqref{eq:gamma}). If $a = c$ and $b = d$, then $2\mu(1/\sqrt{\gamma}) = \mu(a/b)$. The bound remains valid, but is slightly 
weaken, if $4\mu(1/\sqrt{\gamma})$ is replaced by $2\log(16\gamma)$. 
\label{cor:LoewnerMatrices}
\end{corollary}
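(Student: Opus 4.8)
The plan is to mimic, almost verbatim, the proofs of Corollary~\ref{cor:PickMatrices} and Corollary~\ref{cor:CauchyMatrices}: feed the Sylvester equation for $L_n$ into Theorem~\ref{thm:ratio}, and then bound the resulting Zolotarev number by transplanting to a symmetric pair of intervals via a M\"obius transformation and invoking Corollary~\ref{cor:Zbound}. First I would read off the data for Theorem~\ref{thm:ratio} from the displacement identity $D_{\underline{x}}L_n - L_n D_{\underline{y}} = \underline{r}\,\underline{e}^T - \underline{e}\,\underline{s}^T$: take $A = D_{\underline{x}}$ and $B = D_{\underline{y}}$, which are diagonal, hence normal, with $\sigma(A)\subseteq E:=[a,b]$ and $\sigma(B)\subseteq F:=[c,d]$. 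The right-hand side is a sum of two rank-one matrices, so letting $M\in\mathbb{C}^{n\times 2}$ have columns $\underline{r}$ and $\underline{e}$, and $N\in\mathbb{C}^{n\times 2}$ have columns $\underline{e}$ and $-\overline{\underline{s}}$, we may write it as $MN^*=\underline{r}\,\underline{e}^T-\underline{e}\,\underline{s}^T$ with $\displacementRank=2$. Theorem~\ref{thm:ratio} then gives $\sigma_{j+2k}(L_n)\le Z_k([a,b],[c,d])\,\sigma_j(L_n)$ for $1\le j+2k\le n$, which already accounts for the shift of $2k$ in the statement.

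The second step is to estimate $Z_k([a,b],[c,d])$ under the separation hypothesis $b<c$ or $d<a$. Since Zolotarev numbers are invariant under M\"obius transformations, I would transplant $[a,b]\cup[c,d]$ onto $[-\alpha,-1]\cup[1,\alpha]$ for a suitable $\alpha>1$, sending the endpoints in the order dictated by whether $b<c$ or $d<a$, precisely as in the proof of Corollary~\ref{cor:CauchyMatrices}. Invariance of the cross-ratio forces $(\alpha+1)^2/(4\alpha)=\gamma$, where $\gamma$ is the absolute value of the cross-ratio of $a,b,c,d$, so $\alpha=-1+2\gamma+2\sqrt{\gamma^2-\gamma}$ as in~\eqref{eq:gamma}. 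Then $Z_k([a,b],[c,d])=Z_k([-\alpha,-1],[1,\alpha])$, and Corollary~\ref{cor:Zbound} (with the roles of $a,b$ played by $1,\alpha$) yields $Z_k([-\alpha,-1],[1,\alpha])\le 4\,[\exp(\pi^2/(2\mu(1/\alpha)))]^{-2k}$.

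Finally I would simplify the exponent using Gauss' transformation~\eqref{eq:GaussTransform} with $\lambda=1/\alpha$: since $2\sqrt{\alpha}/(\alpha+1)=1/\sqrt{\gamma}$, this reads $\mu(1/\alpha)=2\mu(1/\sqrt{\gamma})$, giving the claimed rate $\exp(\pi^2/(4\mu(1/\sqrt{\gamma})))$. The weakened bound follows from $\mu(\lambda)\le\log(4/\lambda)$, whence $4\mu(1/\sqrt{\gamma})\le 4\log(4\sqrt{\gamma})=2\log(16\gamma)$; and in the symmetric situation relevant to Pick matrices, where $[c,d]=[-b,-a]$, a direct computation gives $\gamma=(a+b)^2/(4ab)$, $1/\sqrt{\gamma}=2\sqrt{ab}/(a+b)$, and Gauss' transformation with $\lambda=a/b$ yields $2\mu(1/\sqrt{\gamma})=\mu(a/b)$. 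There is no serious obstacle in this argument — it is essentially a direct reuse of the Pick and Cauchy proofs — and the only points requiring a little care are verifying that the displacement has rank at most $2$ and keeping track of which endpoints are sent to $\pm1$ and $\pm\alpha$ in the two cases $b<c$ and $d<a$.
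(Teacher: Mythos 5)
Your proposal is correct and follows exactly the paper's route: the paper's proof of Corollary~\ref{cor:LoewnerMatrices} is literally ``the same argument as in Corollary~\ref{cor:CauchyMatrices}, but with $\displacementRank=2$'', which is precisely what you carry out (rank-$2$ displacement into Theorem~\ref{thm:ratio}, M\"obius transplantation with cross-ratio invariance, Corollary~\ref{cor:Zbound}, and Gauss' transformation for the exponent). Your explicit verification of the factorization $MN^*$ and of the symmetric case $2\mu(1/\sqrt{\gamma})=\mu(a/b)$ are fine details consistent with the paper.
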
 
\begin{proof}
The same argument as in Corollary~\ref{cor:CauchyMatrices}, but with $\displacementRank=2$. 
\end{proof} 
Corollary~\ref{cor:LoewnerMatrices} shows that many L\"{o}wner matrices can be well-approximated 
by low rank matrices with ${\rm rank}_\epsilon(L_n) = \mathcal{O}(\log \gamma\log(1/\epsilon))$ and are exponentially ill-conditioned. 

\section{The singular values of Krylov, Vandermonde, and Hankel matrices}\label{sec:HankelMatrices}
The three types of matrices considered in Section~\ref{sec:PickCauchyLoewnerMatrices} allowed for direct
applications of Theorem~\ref{thm:ratio} and Corollary~\ref{cor:Zbound}. In this section, we consider the 
more challenging tasks of bounding the singular values of Krylov matrices with Hermitian arguments, real Vandermonde matrices, 
and real positive definite Hankel matrices. 

\subsection{Krylov and real Vandermonde matrices}\label{sec:VandermondeMatrices}
An $m \times n$ matrix $K_{m,n}$ with $m\geq n$ is said to be a Krylov matrix with Hermitian argument if there exists a Hermitian
matrix $A\in\mathbb{C}^{m\times m}$ and a vector $\underline{w}\in\mathbb{C}^{m\times 1}$ such that
\begin{equation}
K_{m,n} = \begin{bmatrix} \underline{w} \,\Bigg| \, A\underline{w}\, \Bigg| \, \cdots \, \Bigg| \,  A^{n-1}\underline{w}\end{bmatrix}.
\label{eq:KrylovMatrices}
\end{equation} 
Vandermonde matrices of size $m\times n$ with real abscissas $\underline{x}\in\mathbb{R}^{m\times 1}$, i.e., $(V_{m,n})_{jk} = x_j^{k-1}$, are also Krylov 
matrices with $A=D_{\underline{x}}$ and $\underline w=(1,\ldots,1)^T$. Krylov matrices satisfy the following Sylvester matrix equation:
\begin{equation}
A K_{m,n} - K_{m,n} Q = \underline{s}\,\underline{e}_n^T, \qquad Q = \begin{bmatrix}0 & & & -1 \\ 1 \\ & \ddots \\&& 1 & 0 \end{bmatrix},
\label{eq:KrylovDisplacement} 
\end{equation} 
where $\underline{s}\in\mathbb{C}^{m\times 1}$ and $\underline{e}_n = \left(0,\ldots,0,1\right)^T$. Since 
$A$ is a normal matrix, we attempt to use Theorem~\ref{thm:ratio} to bound the singular values of $K_{m,n}$. 

For the analysis that follows, we require that $n$ is an even integer. This is not a loss of generality because 
by the interlacing theorem for singular values~\cite{Thompson_72_01}. To see this, let $K_{m,n-1}$ be the $m\times (n-1)$ 
Krylov matrix obtained from $K_{m,n}$ by removing its last column.  If $n$ is odd, then\footnote{Observe that the singular values of a matrix decrease when removing a column and thus $\sigma_{j}(K_{m,n-1})\leq \sigma_{j}(K_{m,n})$. Let $Y$ be a best rank $j+k-2$ approximation to $K_{m,n-1}$ so that $\sigma_{j+k-1}(K_{m,n-1}) = \| K_{m,n-1} - Y \|_2$ and consider $X$ obtained from $Y$ by concatenating (on the right) the last column of $K_{m,n}$. Then, the rank of $X$ is at most $j+k-1$ and hence, $\sigma_{j+k}(K_{m,n}) \leq \| K_{m,n} - X \|_2 = \| K_{m,n-1} - Y \|_2=\sigma_{j+k-1}(K_{m,n-1})$.}
\begin{equation}
\frac{\sigma_{j+k}(K_{m,n})}{\sigma_j(K_{m,n})} \leq \frac{\sigma_{j+k-1}(K_{m,n-1})}{\sigma_j(K_{m,n-1})}, \qquad 2\leq j+k\leq n,
\label{eq:SingularValuesInterlacing}
\end{equation} 
and one can bound $\sigma_{j+k-1}(K_{m,n-1})/\sigma_j(K_{m,n-1})$ instead. From now on in this section 
we will assume that $n$ is an even integer. 

The Sylvester matrix equation in~\eqref{eq:KrylovDisplacement} contains matrices $A$ and $Q$, which are both normal matrices.
The eigenvalues of $A$ are contained in $\mathbb{R}$ and the eigenvalues of $Q$ are the $n$ (shifted) roots of unity, i.e., 
\[
\sigma(Q) = \left\{z\in\mathbb{C} : z = e^{ \frac{2\pi i (j+1/2)}{n} }, 0\leq j\leq n-1\right\}.
\]
Since $n$ is even, the spectrum of $Q$ and the real line are disjoint. Using Theorem~\ref{thm:ratio} we find that for $j\geq 1$ and $1\leq j+k\leq n$
\[
\sigma_{j+k}(K_{m,n}) \leq Z_k(E,F) \sigma_{j}(K_{m,n}), \qquad E \subseteq \mathbb{R}, \quad F = F_+\cup F_{-},
\]
where $F_+$ and $F_-$ are complex sets defined by 
\begin{equation}
F_+ = \{ e^{it}  : t \in [\tfrac{\pi}{n}, \pi - \tfrac{\pi}{n}]\}, \quad F_- = \{ e^{it} : t \in [- \pi + \tfrac{\pi}{n}, - \tfrac{\pi}{n}]\}.
\label{eq:FplusFminus}
\end{equation} 
Figure~\ref{fig:ZolotarevForHankel} shows the two sets $E$ and $F$ in the complex plane. As $n\rightarrow \infty$ the sets $F_+$ 
and $F_-$ approach the real line, suggesting that our bound on the singular values must depend on $n$ somehow.  
Our task is to bound the quantity $Z_k(E,F_+\cup F_-)$ --- a Zolotarev number that is not immediately related to one of the form 
$Z_k([-b,-a],[a,b])$. 

\begin{figure} 
\centering
\begin{tikzpicture} 
\tikzset{cross/.style={cross out, draw=black, thick, minimum size=2*(#1-\pgflinewidth), inner sep=0pt, outer sep=0pt},
cross/.default={3pt}};
\draw[ultra thick, black]( 5.9754, 3.3129) arc (9.000:171.000:2);
\draw[ultra thick, black]( 2.0246, 2.6871) arc (189.000:351.000:2);
\draw[fill,black] (5.9754,2.6871) circle (.1cm);
\draw[fill,black] (5.7820,2.0920) circle (.1cm);
\draw[fill,black] (5.4142,1.5858) circle (.1cm);
\draw[fill,black] (4.9080,1.2180) circle (.1cm);
\draw[fill,black] (4.3129,1.0246) circle (.1cm);
\draw[fill,black] (3.6871,1.0246) circle (.1cm);
\draw[fill,black] (3.0920,1.2180) circle (.1cm);
\draw[fill,black] (2.5858,1.5858) circle (.1cm);
\draw[fill,black] (2.2180,2.0920) circle (.1cm);
\draw[fill,black] (2.0246,2.6871) circle (.1cm);
\draw[fill,black] (2.0246,3.3129) circle (.1cm);
\draw[fill,black] (2.2180,3.9080) circle (.1cm);
\draw[fill,black] (2.5858,4.4142) circle (.1cm);
\draw[fill,black] (3.0920,4.7820) circle (.1cm);
\draw[fill,black] (3.6871,4.9754) circle (.1cm);
\draw[fill,black] (4.3129,4.9754) circle (.1cm);
\draw[fill,black] (4.9080,4.7820) circle (.1cm);
\draw[fill,black] (5.4142,4.4142) circle (.1cm);
\draw[fill,black] (5.7820,3.9080) circle (.1cm);
\draw[fill,black] (5.9754,3.3129) circle (.1cm);
\draw[ultra thick, black] (1,3) -- (7,3);
\draw[ultra thick, black, dashed] (7,3) -- (8,3) node[anchor=south] {$\sigma(A)\subset E \subseteq\mathbb{R}$};
\draw[ultra thick, black, dashed] (0,3) -- (1,3);
\node[] at (6,4.5) (a) {$F_{+}$};
\node[] at (6,1.5) (a) {$F_{-}$};
\node[] at (8.5,2) (a) {$\sigma(Q)\subset F = F_{+}\cup F_{-}$};
\node[] at (4.2,2.8) (a) {$\mathbf{0}$};
\draw (4,3) node[cross,rotate=0] {};
\end{tikzpicture}
\put(-288,113){\footnotesize ${\rm Im}$}
\put(-270,98){\footnotesize ${\rm Re}$}
\put(-278,105){$\longrightarrow$}
\put(-275,101){\rotatebox{90}{ $\longrightarrow$}}
\caption{The sets $E$ and $F$ in the complex plane for the Zolotarev problem~\eqref{eq:zolotarev} used to bound 
the singular values of a $20\times 20$ Krylov matrix with a Hermitian argument. The sets $F_{+}$ and $F_{-}$ are a distance of only $\mathcal{O}(1/n)$ from 
the real axis, where $n$ is the size of the Krylov matrix, and this causes the $\log n$ dependence in the weaken version of~\eqref{eq:KrylovBound3}. 
The solid black dots denote the spectrum of $Q$, which is contained in $F_{+}\cup F_{-}$.}
\label{fig:ZolotarevForHankel}
\end{figure} 

The following lemma relates the quantity $Z_{2k}(E, F_+\cup F_-)$ to the Zolotarev number $Z_k([-1/\ell,-\ell], [\ell,1/\ell])$ with $\ell = \tan(\pi/(2n))$: 
\begin{lemma}
Let $k\geq 1$ be an integer and $E\subseteq \mathbb{R}$.  Then, $Z_{2k+1}(E,F_+\cup F_-)\leq Z_{2k}(E,F_+\cup F_-)$ and 
\[
Z_{2k}(E,F_+\cup F_-) \leq \frac{2\sqrt{Z_{k}}}{1+Z_{k}}, \quad Z_k := Z_k([-1/\ell,-\ell], [\ell,1/\ell]),
\] 
where $\ell = \tan(\pi/(2n))$, the complex sets $F_+$ and $F_-$ are defined in~\eqref{eq:FplusFminus}, and $n$ is an even integer. 
\label{lem:ZolotarevRelation} 
\end{lemma}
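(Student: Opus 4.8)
The inequality $Z_{2k+1}(E,F_+\cup F_-)\le Z_{2k}(E,F_+\cup F_-)$ is just the monotonicity of Zolotarev numbers in the degree, so only the second bound needs an argument. The plan is to transport the condenser $(E,F_+\cup F_-)$ to a standard symmetric real condenser in three reductions: (i) a M\"obius map that straightens the two circular arcs $F_\pm$ into two segments of the imaginary axis; (ii) the substitution $z\mapsto z^2$, which halves the admissible degree; and (iii) an identification of the resulting quantity with $2\sqrt{Z_k}/(1+Z_k)$ through the Gr\"otzsch ring function identities used in the proof of Theorem~\ref{thm:Zproduct}.

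For (i) I would first enlarge $E$ to $\mathbb{R}\cup\{\infty\}$, which only increases $Z_{2k}$; hereafter all sets are closed subsets of the Riemann sphere, on which the Zolotarev number and its M\"obius invariance extend unchanged. The Cayley-type map $\psi(z)=(1+z)/(1-z)$ fixes $\mathbb{R}\cup\{\infty\}$ setwise and satisfies $\psi(e^{it})=i\cot(t/2)$; writing $\ell=\tan(\pi/(2n))$ and tracking the endpoints of the arcs shows $\psi(F_+)=\{iy:y\in[\ell,1/\ell]\}$ and $\psi(F_-)=\{iy:y\in[-1/\ell,-\ell]\}$. Composing $\psi$ with the rotation $z\mapsto z/i$ and using M\"obius invariance gives
\[
Z_{2k}(E,F_+\cup F_-)\le Z_{2k}\bigl(i(\mathbb{R}\cup\{\infty\}),\,I\bigr),\qquad I:=[-1/\ell,-\ell]\cup[\ell,1/\ell].
\]

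For (ii), every $\tilde r\in\mathcal{R}_{k,k}$ produces $z\mapsto\tilde r(z^2)\in\mathcal{R}_{2k,2k}$, and since $z\mapsto z^2$ carries $i(\mathbb{R}\cup\{\infty\})$ onto $(-\infty,0]\cup\{\infty\}$ and $I$ onto $[\ell^2,1/\ell^2]$, testing $Z_{2k}$ against this subclass of rational functions yields $Z_{2k}(i(\mathbb{R}\cup\{\infty\}),I)\le Z_k\bigl((-\infty,0]\cup\{\infty\},[\ell^2,1/\ell^2]\bigr)$. For (iii), I would M\"obius-transform $\bigl((-\infty,0]\cup\{\infty\},[\ell^2,1/\ell^2]\bigr)$ to a symmetric condenser $([-b,-a],[a,b])$ by sending $\infty,0,\ell^2,1/\ell^2$ to $-b,-a,a,b$; matching cross-ratios forces $(a+b)^2/(4ab)=\ell^{-4}$, that is $2\sqrt{a/b}/(1+a/b)=\ell^2$, whence $\mu(a/b)=2\mu(\ell^2)$ by Gauss' transformation~\eqref{eq:GaussTransform}. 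Feeding this and the ratio $\ell^2$ of $Z_k=Z_k([-1/\ell,-\ell],[\ell,1/\ell])$ into the identity $\mu(Z_k([-B,-A],[A,B]))=\pi^2k/\mu(A/B)$ from~\eqref{eq:LandenTransformation2} gives $\mu(Z_k([-b,-a],[a,b]))=\pi^2k/(2\mu(\ell^2))=\tfrac12\mu(Z_k)$, and since $\mu$ is a strictly decreasing bijection with $\mu(2\sqrt{Z_k}/(1+Z_k))=\tfrac12\mu(Z_k)$, this gives $Z_k([-b,-a],[a,b])=2\sqrt{Z_k}/(1+Z_k)$. Chaining (i)--(iii) proves the lemma.

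I expect the only delicate point to be (i): verifying that $\psi$ sends $F_+\cup F_-$ exactly onto the pair of imaginary segments governed by $\ell=\tan(\pi/(2n))$. That computation is also the sole place where the geometry of $F_+\cup F_-$ enters, so it is what makes $\ell$---and ultimately the $\log n$ factor in the downstream singular-value bounds---depend on $n$. The remaining steps are bookkeeping with M\"obius invariance, the map $z\mapsto z^2$, and the elliptic-integral identities already collected in Section~\ref{sec:Zolotarev}.
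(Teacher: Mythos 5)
Your argument is correct, and it reaches the bound by a genuinely different route than the paper. The paper works with the explicit extremal function $R\in\mathcal{R}_{k,k}$ for $Z_k([-1/\ell,-\ell],[\ell,1/\ell])$ constructed in Theorem~\ref{thm:ZextremalRational} and manufactures the degree-$2k$ test function $S=(R+1/R)/2$; the constant $2\sqrt{Z_k}/(1+Z_k)$ then falls out of the pointwise estimate of $2x/(1+x^2)$ on the range of $R$ over $[-1/\ell,-\ell]$, together with $|R(iz)|=1$. You never touch the extremal function: you chain condenser transformations --- enlarging $E$ to $\mathbb{R}\cup\{\infty\}$, straightening $F_\pm$ with a Cayley map (your $\psi$ composed with $z\mapsto z/i$ is essentially the paper's $T$), halving the degree via $z\mapsto z^2$, and symmetrizing with a final M\"obius map --- and then identify the resulting $Z_k([-b,-a],[a,b])$ with $2\sqrt{Z_k}/(1+Z_k)$ through cross-ratios and the identities~\eqref{eq:GaussTransform} and~\eqref{eq:LandenTransformation2}. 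I verified the computations: $\psi(e^{it})=i\cot(t/2)$ sends $F_\pm$ to $\pm i[\ell,1/\ell]$ with $\ell=\tan(\pi/(2n))$; the cross-ratio of $\infty,0,\ell^2,1/\ell^2$ is $\ell^{-4}$, forcing $2\sqrt{a/b}/(1+a/b)=\ell^2$ and hence $\mu(a/b)=2\mu(\ell^2)$; and then $\mu\bigl(Z_k([-b,-a],[a,b])\bigr)=\pi^2k/(2\mu(\ell^2))=\tfrac12\mu(Z_k)=\mu\bigl(2\sqrt{Z_k}/(1+Z_k)\bigr)$, which closes the identification since $\mu$ is injective. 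Two points deserve an explicit sentence in a final write-up: the monotonicity and M\"obius invariance of $Z_k$ are stated in the paper for subsets of $\mathbb{C}$, whereas you apply them to closed arcs on the Riemann sphere through $\infty$ --- harmless, since $|r|$ extends continuously to the sphere with values in $[0,\infty]$ and $\mathcal{R}_{2k,2k}$ is stable under precomposition with M\"obius maps, but it should be said; and the degree-halving step is only an inequality because you restrict to even rational functions, which is precisely the same loss the paper incurs by choosing the even test function $S$. What your route buys is the insight that $2\sqrt{Z_k}/(1+Z_k)$ is itself an exact Zolotarev number of a transformed condenser (equivalently, the sign-function error $E_{k,k}$ of the appendix); what the paper's route buys is an explicit near-optimal test function, and with it explicit ADI shift parameters.
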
 
\begin{proof} 
Let $R(z)\in \mathcal{R}_{k,k}$ be the extremal function for $Z_{k}:=Z_{k}([-1/\ell,-\ell],[\ell,1/\ell])$ characterized in Theorem~\ref{thm:ZextremalRational}, where $\ell = \tan(\pi/(2n))$. Since the M\"{o}bius transform given by 
\[
T(z) = \frac{1}{i} \frac{z-1}{z+1}
\]
maps $F_+$ to $[\ell,1/\ell]$, $F_-$ to $[-1/\ell,-\ell]$, and $\mathbb{R}$ to $i\mathbb{R}$, we have 
\[
Z_{2k}(\mathbb{R},F_+\cup F_-) = Z_{2k}(i\mathbb{R}, [-1/\ell,-\ell] \cup [\ell,1/\ell]) = \inf_{r\in\mathcal{R}_{2k,2k}}\frac{\sup_{z\in\mathbb{R}} |r(iz)|}{\inf_{z\in[-1/\ell,-\ell] \cup [\ell,1/\ell]} |r(z)|}.
\]
Now, consider the rational function 
\[
S(z) = \frac{R(z) + 1/R(z)}{2} = \frac{R(z)+R(-z)}{2} \in \mathcal{R}_{2k,2k}, 
\]
where we used the fact that $1/R(z) = R(-z)$ (see Theorem~\ref{thm:ZextremalRational}, (b)). 
Since $|R(iz)|=1$ for $z\in\mathbb{R}$ (seeTheorem~\ref{thm:ZextremalRational}, (c)), we have
\[
\sup_{z\in\mathbb{R}} \left|S(iz)\right|  = \sup_{z\in\mathbb{R}} \left|\frac{R(iz) + R(-iz)}{2}\right| \leq 1.
\]
Moreover, since $-1\leq-\sqrt{Z_k}\leq R(z)\leq \sqrt{Z_k}\leq 1$ for $z\in [-1/\ell,-\ell]$ (see Theorem~\ref{thm:ZextremalRational}, (a)) and $x\mapsto 2x/(1+x^2)$ is a nondecreasing function 
on $[-1,1]$ and $S(-z)=S(z)$, we have 
\[
\begin{aligned}
\inf_{z\in[-1/\ell,-\ell] \cup [\ell,1/\ell]} \left|S(z)\right| &= \sup_{z\in[-1/\ell,-\ell]} \left|\frac{2}{R(z)+1/R(z)}\right| \\
&= \sup_{z\in[-1/\ell,-\ell]} \left|\frac{2R(z)}{1+R(z)^2}\right|\\
&\leq \frac{2\sqrt{Z_k}}{1+Z_k}.
\end{aligned}
\]
Therefore, $Z_{2k}(E,F_+\cup F_-) \leq Z_{2k}(\mathbb{R},F_+\cup F_-) \leq 2\sqrt{Z_k}/(1+Z_k)$ as required. The bound 
$Z_{2k+1}(E,F_+\cup F_-)\leq Z_{2k}(E,F_+\cup F_-)$ trivially holds from the definition of Zolotarev numbers. 
\end{proof}

By Corollary~\ref{cor:Zbound} we have the slightly weaker upper bound for $Z_{2k}(E,F_+\cup F_-)$:
\[
Z_{2k}(E,F_+\cup F_-) \leq 2\sqrt{Z_k([-1/\ell,-\ell],[\ell,1/\ell])} \leq 4 \rho^{-k}, 
\]
where since $\tan x \geq x$ for $0\leq x\leq \pi/2$, we have 
\[
\rho = {\rm exp}\!\left(\frac{\pi^2}{2\mu(\tan(\pi/(2n))^2)}\right) \geq  \exp\!\left(\frac{\pi^2}{2\log( 4 / \tan(\pi/(2n))^2 )}\right) \geq \exp\!\left(\frac{\pi^2}{4\log(4n/\pi)}\right)\!.
\]
If $n$ is an even integer, then we can immediately conclude a bound on the singular values from Theorem~\ref{thm:ratio}. If $n$ is an odd integer, 
then one must employ~\eqref{eq:SingularValuesInterlacing} first.
\begin{corollary} 
The singular values of $K_{m,n}$ can be bounded as follows:
\begin{equation}
\sigma_{j+2k}(K_{m,n}) \leq 4\!\left[\exp\!\left(\frac{\pi^2}{2\mu(\tan(\pi/(4\lfloor n/2\rfloor))^2)}\right)\right]^{-k+[n]_2} \sigma_{j}(K_{m,n}), \quad 1\leq j+2k\leq n,
\label{eq:KrylovBound3} 
\end{equation} 
where $\mu(\cdot)$ is the Gr\"{o}tzsch function and $[n]_2=1$ if $n$ is odd and is $0$ if $n$ is even. 
The bound above remains valid, but is slightly weaken, if $2\mu(\tan(\pi/(4\lfloor n/2\rfloor))^2)$ is replaced by $4\log(8\lfloor n/2\rfloor/\pi)$.
\label{cor:KrylovBounds} 
\end{corollary}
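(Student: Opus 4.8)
The plan is to reduce both parities of $n$ to Theorem~\ref{thm:ratio}, Lemma~\ref{lem:ZolotarevRelation}, and Corollary~\ref{cor:Zbound}, handling odd $n$ by first invoking the interlacing inequality~\eqref{eq:SingularValuesInterlacing}. Suppose first that $n$ is even. The Sylvester equation~\eqref{eq:KrylovDisplacement} has $A$ Hermitian and $Q$ unitary, hence both normal, with displacement rank $\displacementRank=1$; taking any $E\subseteq\mathbb{R}$ with $\sigma(A)\subseteq E$ and $F=F_+\cup F_-\supseteq\sigma(Q)$ from~\eqref{eq:FplusFminus}, Theorem~\ref{thm:ratio} gives $\sigma_{j+k}(K_{m,n})\leq Z_k(E,F_+\cup F_-)\,\sigma_j(K_{m,n})$ for $1\leq j+k\leq n$. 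Replacing $k$ by $2k$ and then applying Lemma~\ref{lem:ZolotarevRelation} followed by the upper bound of Corollary~\ref{cor:Zbound} — that is, $2\sqrt{Z_k}/(1+Z_k)\leq 2\sqrt{Z_k}\leq 4\rho^{-k}$ with $\rho=\exp(\pi^2/(2\mu(\ell^2)))$, $\ell=\tan(\pi/(2n))$, and $Z_k=Z_k([-1/\ell,-\ell],[\ell,1/\ell])$ — yields $\sigma_{j+2k}(K_{m,n})\leq 4\rho^{-k}\sigma_j(K_{m,n})$ whenever $1\leq j+2k\leq n$. Since $n$ is even, $\lfloor n/2\rfloor=n/2$, so $\tan(\pi/(4\lfloor n/2\rfloor))=\tan(\pi/(2n))$ and $[n]_2=0$; this is precisely~\eqref{eq:KrylovBound3}.

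Now suppose $n$ is odd. When $k=0$ the claimed inequality is trivial, since the exponent $-k+[n]_2=1$ is positive and the base exceeds $1$. For $k\geq1$, let $K_{m,n-1}$ be the Krylov matrix obtained from $K_{m,n}$ by deleting its last column; then $n-1$ is even, and the interlacing inequality~\eqref{eq:SingularValuesInterlacing} with $k$ replaced by $2k$ (note $j+2k\geq3\geq2$) gives
\[
\frac{\sigma_{j+2k}(K_{m,n})}{\sigma_j(K_{m,n})}\leq\frac{\sigma_{j+2k-1}(K_{m,n-1})}{\sigma_j(K_{m,n-1})}, \qquad 2\leq j+2k\leq n.
\]
I would then bound the right-hand ratio by $Z_{2k-1}(E,F_+\cup F_-)$ via Theorem~\ref{thm:ratio} applied to the displacement equation of $K_{m,n-1}$ — here $F_\pm$ are as in~\eqref{eq:FplusFminus} with $n$ replaced by $n-1$ — and use the monotonicity $Z_{2k-1}\leq Z_{2(k-1)}$ of Zolotarev numbers, then Lemma~\ref{lem:ZolotarevRelation} with index $k-1$ and Corollary~\ref{cor:Zbound}, to obtain $Z_{2(k-1)}(E,F_+\cup F_-)\leq 4(\rho')^{-(k-1)}$ with $\rho'=\exp(\pi^2/(2\mu(\tan(\pi/(2(n-1)))^2)))$ (the value $Z_0=1$ absorbs the $k=1$ case since $(\rho')^0=1$). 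Since $\lfloor n/2\rfloor=(n-1)/2$ for odd $n$, the number $\rho'$ is exactly the base appearing in~\eqref{eq:KrylovBound3}, and $-(k-1)=-k+1=-k+[n]_2$; combining with the interlacing step completes this case.

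Finally, to obtain the Gr\"{o}tzsch-free form, I would use $\tan x\geq x$ on $[0,\pi/2]$ together with $\mu(\lambda)\leq\log(4/\lambda)$ from~\cite[(19.9.5)]{NISTHandbook} to estimate, with $N=\lfloor n/2\rfloor$,
\[
2\mu(\tan(\pi/(4N))^2)\leq 2\log\!\big(4/\tan(\pi/(4N))^2\big)\leq 2\log(64N^2/\pi^2)=4\log(8N/\pi),
\]
so that the base $\exp(\pi^2/(2\mu(\cdot)))$ is at least $\exp(\pi^2/(4\log(8N/\pi)))$; replacing the base by this smaller quantity only enlarges the right-hand side of~\eqref{eq:KrylovBound3} when the exponent $-k+[n]_2$ is nonpositive (the remaining case $k=0$, $n$ odd being trivial), so the weaker bound stays valid. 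I expect the only genuine difficulty to be bookkeeping in the odd case: correctly tracking the index shift caused by column deletion, converting $Z_{2k-1}$ into an even-index Zolotarev number so that Lemma~\ref{lem:ZolotarevRelation} applies, and verifying that $\lfloor n/2\rfloor$ and $[n]_2$ package the parity dependence uniformly. All of the analytic content is already contained in the cited results, so no estimates beyond the elementary $\tan x\geq x$ and $\mu(\lambda)\leq\log(4/\lambda)$ are needed.
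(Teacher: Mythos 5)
Your proposal is correct and follows essentially the same route as the paper: Theorem~\ref{thm:ratio} with the displacement equation~\eqref{eq:KrylovDisplacement}, then Lemma~\ref{lem:ZolotarevRelation} and Corollary~\ref{cor:Zbound} for even $n$, with the interlacing inequality~\eqref{eq:SingularValuesInterlacing} handling odd $n$ and the elementary bounds $\tan x\geq x$, $\mu(\lambda)\leq\log(4/\lambda)$ giving the Gr\"{o}tzsch-free form. Your bookkeeping of the odd case (the index shift, $Z_{2k-1}\leq Z_{2(k-1)}$, and the trivial $k=0$ case) is in fact more explicit than the paper's brief remark.
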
 

Figure~\ref{fig:KrylovMatrices} demonstrates the bound on the singular values in~\eqref{eq:KrylovBound3} on $n\times n$ 
Krylov matrices, where $n = 10, 100, 1000$. The step behavior of the bound is due to the fact that~\eqref{eq:KrylovBound3} 
only bounds $\sigma_{1+2k}(K_{m,n})$ when $n$ is even and we use the trivial inequality $\sigma_{2k+2}(K_{m,n})\leq \sigma_{2k+1}(K_{m,n})$ otherwise. 
One also observes that the singular values of Krylov matrices with Hermitian arguments can
decay at a supergeometric rate; however, the analysis in this paper only realizes a geometric decay. 
Therefore,~\eqref{eq:KrylovBound3} is only a reasonable bound on $\sigma_j(K_{m,n})$ when $j$ is a small integer with respect 
to $n$. If $j/n\rightarrow c$ and $c\in(0,1)$, then improved bounds on $\sigma_j(K_{m,n})$ may be possible by bounding discrete 
Zolotarev numbers~\cite{BB_Gryson}.   The bound in~\eqref{eq:KrylovBound3} provides an upper bound on 
the $\epsilon$-rank of $K_{m,n}$:
\[
{\rm rank}_\epsilon(K_{m,n}) \leq  2\bigg\lceil \frac{4\log\left(8\lfloor n/2\rfloor/\pi\right)\log\left(4/\epsilon\right)}{\pi^2} \bigg\rceil + 2,
\]
which allows for either an odd or even integer $n$. 
\begin{figure} 
\begin{minipage}{.49\textwidth} 
\begin{overpic}[width=\textwidth]{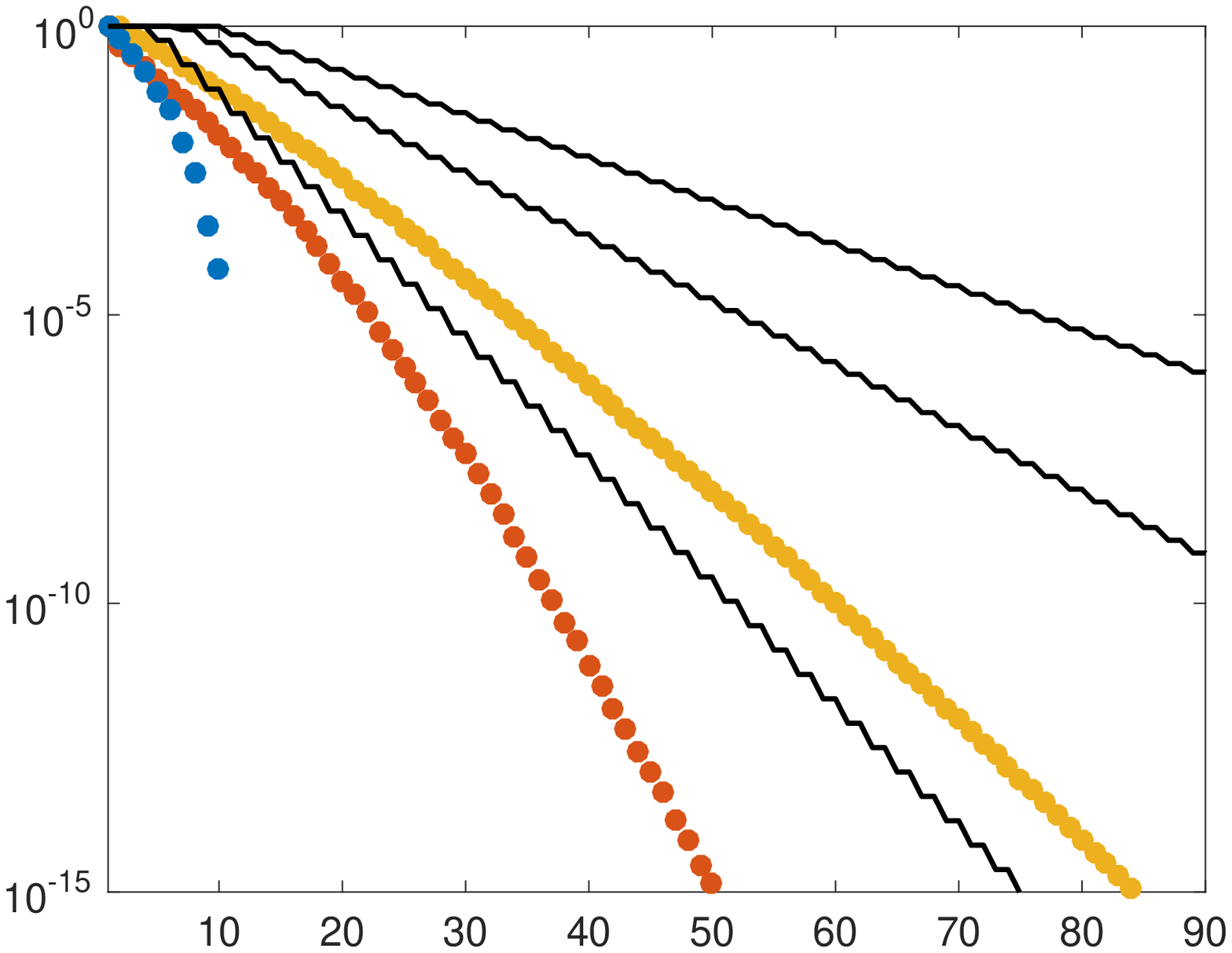}  
\put(47,0) {Index}
\put(54,25) {\rotatebox{-45}{$n = 10$}}
\put(69,38) {\rotatebox{-32}{$n = 100$}}
\put(69,48) {\rotatebox{-22}{$n = 1000$}}
\end{overpic}  
\end{minipage} 
\begin{minipage}{.49\textwidth}
\begin{overpic}[width=\textwidth]{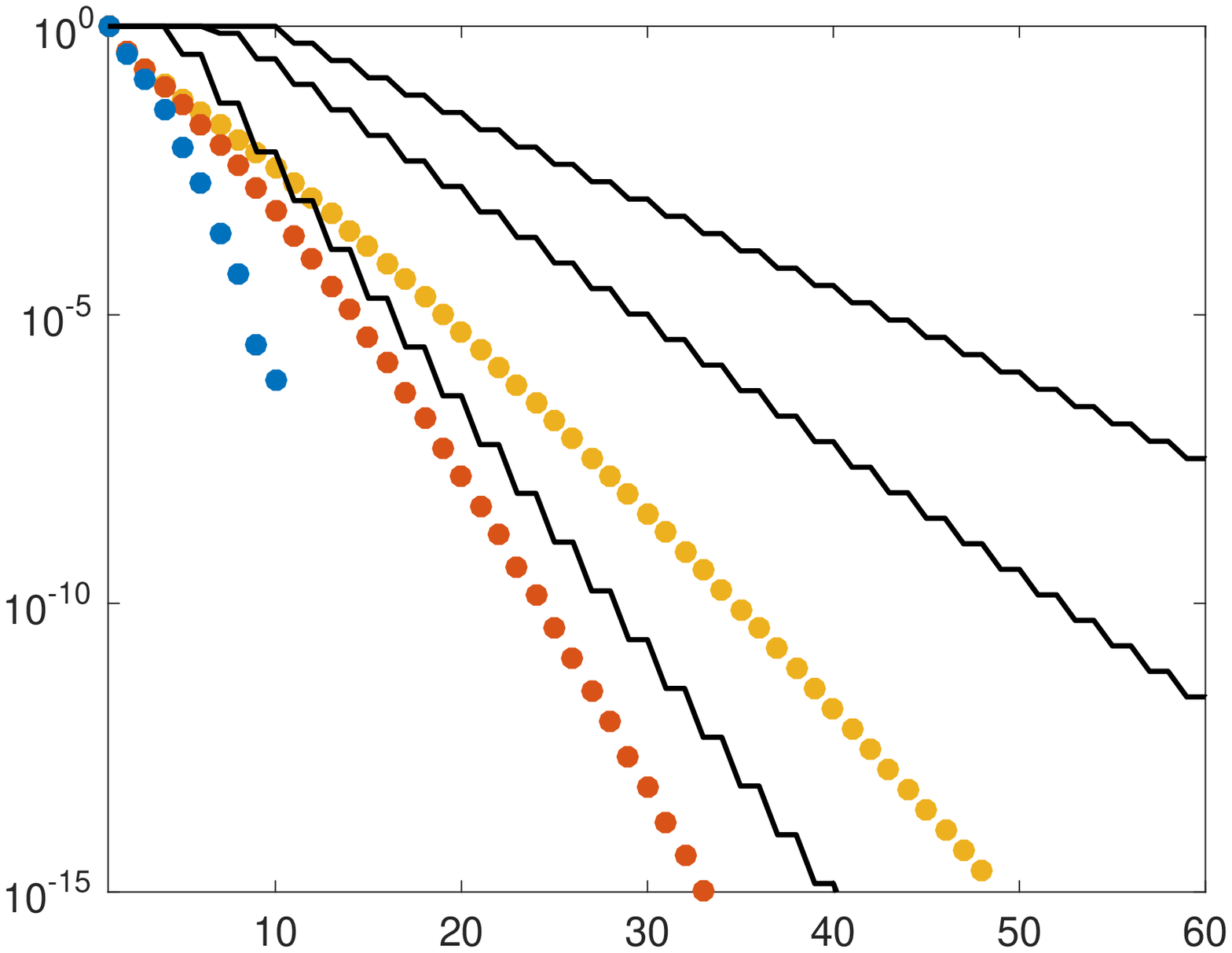}  
\put(47,0) {Index}
\put(54,25) {\rotatebox{-56}{$n = 10$}}
\put(69,38) {\rotatebox{-36}{$n = 100$}}
\put(69,50) {\rotatebox{-25}{$n = 1000$}}
\end{overpic}  
\end{minipage} 
\caption{Left: The singular values of $n\times n$ Krylov matrix (colored dots) compared to the bound in~\eqref{eq:KrylovBound3} for $n = 10$ (blue),$100$ (red), $1000$ (yellow). In~\eqref{eq:KrylovMatrices} the matrix $A$ is a diagonal matrix with
entries taken to be equally spaced points in $[-1,1]$ and $\underline{w}$ is a random vector with independent 
Gaussian entries. Right: The singular values of the $n\times n$ real positive definite Hankel matrices (colored dots) associated to the measure $\mu_H(x) = \mathbf{1}|_{-1\leq x\leq 1}$ 
compared to the bound in~\eqref{eq:HankelBound} for $n = 10$ (blue),$100$ (red), $1000$ (yellow).}
\label{fig:KrylovMatrices} 
\end{figure} 

Recall that Vandermonde matrices with real abscissas are also Krylov matrices with Hermitian arguments.
Therefore, the bounds in this section also apply to Vandermonde matrices with real abscissas and shows that 
they have rapidly decaying singular values and are exponentially ill-conditioned. An observation that has been 
extensively investigated in the literature~\cite{b46,gaut2,Pan_16_01}.

\subsection{Real positive definite Hankel matrices} \label{sec:PosDefHankel}
An $n\times n$ matrix $H_n$ is a Hankel matrix if the matrix is constant along each anti-diagonal, i.e., $(H_n)_{jk} = h_{j+k}$ for $1\leq j,k\leq n$. 
Clearly, not all Hankel matrices have decaying singular values, for example, the exchange matrix 
has repeated singular values of $1$.  This means that any displacement structure that is satisfied by all Hankel matrices, for example, 
\[
{\rm rank}\left( QX - XQ^T \right) \leq 2,
\]
where $Q$ is given in~\eqref{eq:KrylovDisplacement}, does not result in a Zolotarev number that decays. 
Motivated by the Hilbert matrix in Section~\ref{sec:CauchyMatrices}, we show that every real and positive 
definite Hankel matrix has rapidly decaying singular values.  Previous work has led to bounds that can be 
calculated by using a pivoted Cholesky algorithm~\cite{Antoulas_02_01}, bounds for very special 
cases~\cite{Townsend_16_01}, as well as incomplete attempts~\cite{tyrt98,tyrt}. 

In order to exploit the positive definite structure we recall that the Hamburger moment problem states 
that a real Hankel matrix is positive semidefinite if and only if it is associated to a nonnegative Borel measure 
supported on the real line. 
\begin{lemma}
 A real $n \times n$ Hankel matrix, $H_n$, is positive semidefinite if and only if there exists a 
 nonnegative Borel measure $\mu_H$ supported on the real line such that
 \begin{equation}
  (H_n)_{jk} = \int_{-\infty}^{\infty} x^{j+k-2} \mathrm{d}\mu_H(x), \qquad 1\leq j,k\leq n.
  \label{eq:moments}
 \end{equation}
 \end{lemma}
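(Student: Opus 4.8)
The plan is to prove the classical Hamburger-type characterization of positive semidefinite Hankel matrices by establishing both implications separately. For the easy direction, suppose a nonnegative Borel measure $\mu_H$ on $\mathbb{R}$ satisfies~\eqref{eq:moments}; then for any vector $\underline{c}=(c_1,\ldots,c_n)^T\in\mathbb{R}^n$ I would compute the quadratic form
\[
\underline{c}^T H_n \underline{c} = \sum_{j,k=1}^n c_j c_k \int_{-\infty}^\infty x^{j+k-2}\,\mathrm{d}\mu_H(x) = \int_{-\infty}^\infty \left(\sum_{j=1}^n c_j x^{j-1}\right)^2 \mathrm{d}\mu_H(x) \geq 0,
\]
since the integrand is a nonnegative function and $\mu_H$ is a nonnegative measure. (One should remark that the integrals defining the entries are assumed to be finite, so $\mu_H$ has moments up to order $2n-2$; this is implicit in~\eqref{eq:moments}.) This shows $H_n$ is positive semidefinite.

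For the converse, suppose the real Hankel matrix $H_n$ with entries $h_{j+k}$ (for $1\le j,k\le n$, so involving $h_2,\ldots,h_{2n}$) is positive semidefinite. The task is to produce a nonnegative measure $\mu_H$ whose moments $m_\ell=\int x^\ell\,\mathrm{d}\mu_H$ satisfy $m_{j+k-2}=h_{j+k}$. The standard approach is to use a Gauss-type quadrature / spectral construction: I would diagonalize the associated Jacobi-like structure, or more directly invoke the truncated Hamburger moment problem, which states that a finite sequence $s_0,s_1,\ldots,s_{2n-2}$ (here $s_\ell = h_{\ell+2}$) admits a representing nonnegative measure precisely when the Hankel matrix $(s_{j+k-2})_{j,k=1}^n$ is positive semidefinite. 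A clean self-contained route: since $H_n\succeq 0$, write $H_n = \sum_{i=1}^r \lambda_i \underline{v}_i\underline{v}_i^T$ with $\lambda_i>0$; this does not immediately give a measure, so instead one should use the fact that a positive semidefinite Hankel matrix of size $n$ can be extended to one of size $n+1$ (possibly after an arbitrarily small perturbation if $H_n$ is singular, or by a rank-preserving extension), and then appeal to the fact that finitely-supported atomic measures $\sum_{i} w_i \delta_{t_i}$ with $w_i\ge 0$ realize any positive semidefinite Hankel matrix — the nodes $t_i$ being eigenvalues of a truncated multiplication operator and the weights being squared first components of eigenvectors.

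The cleanest exposition, which I would adopt, is simply to cite the truncated moment problem: there exists a nonnegative measure (indeed a finitely atomic one with at most $n$ atoms) representing the sequence if and only if the Hankel matrix is positive semidefinite; see for instance the classical references on the Hamburger moment problem. Since this is a well-known result and the lemma is stated as background rather than a novel contribution, the proof can reasonably be condensed to: the ``if'' direction is the displayed computation above, and the ``only if'' direction follows from the solvability of the truncated Hamburger moment problem for a positive semidefinite Hankel matrix, realized by an atomic measure supported at the eigenvalues of the compression of the multiplication-by-$x$ operator.

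The main obstacle is the ``only if'' direction when $H_n$ is singular: one cannot directly build a genuine positive-definite Jacobi matrix, and some care is needed to produce the atomic measure (one gets a measure with strictly fewer than $n$ atoms, and the construction must handle the rank-deficient case, typically by working with the rank $r=\operatorname{rank}(H_n)$ leading principal structure). I expect to handle this either by a perturbation argument ($H_n + \epsilon H_n'$ for a suitable positive definite Hankel $H_n'$, then a weak-$*$ limit of the representing measures, using that the moments are continuous and the measures can be taken with uniformly bounded mass and — after a standard tightness argument since the atoms stay bounded — no escape of mass) or by directly citing the truncated moment problem literature. Given the role of this lemma in the paper, citing the classical result is the most economical choice.
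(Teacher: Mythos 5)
The paper does not actually prove this lemma; it is dispatched with a one-line citation to Peller. Your ``if'' direction (measure $\Rightarrow$ positive semidefinite) is the standard quadratic-form computation and is fine. The problems are all in the converse, and they are not merely expository. The step that fails is your perturbation argument: after replacing $H_n$ by $H_n+\epsilon H_n'$ and extracting representing measures $\mu_\epsilon$, you assert that ``the atoms stay bounded --- no escape of mass.'' They need not. An atom at $t_\epsilon\to\infty$ with weight $w_\epsilon\to 0$ can vanish in the weak-$*$ limit while still contributing a fixed positive amount to the top moment $\int x^{2n-2}\,\mathrm{d}\mu_\epsilon$, so the truncated moment cone is not closed and the limit measure need not represent $H_n$.

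Worse, no argument can close this gap, because the direction ``positive semidefinite $\Rightarrow$ representing measure exists'' is false for singular Hankel matrices. Take $n=3$ and $(h_2,\dots,h_6)=(1,1,1,1,2)$, i.e.
\[
H_3=\begin{pmatrix}1&1&1\\1&1&1\\1&1&2\end{pmatrix},\qquad \underline{c}^TH_3\,\underline{c}=(c_1+c_2+c_3)^2+c_3^2\ \ge\ 0 .
\]
Any representing measure would satisfy $\int(x-1)^2\,\mathrm{d}\mu_H=m_2-2m_1+m_0=0$, forcing $\mu_H=\delta_1$ and hence $m_4=1\neq 2$. (This $H_3$ is exactly the limit of the moment matrices of $\delta_1+\epsilon^4\delta_{1/\epsilon}$, which is why your perturbation scheme cannot detect the failure.) The truncated Hamburger moment problem therefore requires more than positive semidefiniteness --- the Curto--Fialkov solvability criterion adds a rank/recursiveness condition --- so ``citing the classical result,'' which is also all the paper does, does not establish the equivalence as stated; the clean iff holds for the \emph{infinite} Hankel matrix of a full moment sequence, not for an arbitrary finite section. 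What is true, and is all the paper actually uses downstream, is the positive \emph{definite} case: there your orthogonal-polynomial/Gauss-quadrature construction goes through and produces an $n$-atomic representing measure.
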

 \begin{proof}
  For a proof, see~\cite[Theorem~7.1]{Peller_12_01}. 
 \end{proof}

Let $H_n$ be a real positive definite Hankel matrix associated to the nonnegative weight $\mu_H$ in~\eqref{eq:moments} supported on $\mathbb{R}$. 
Let $x_1,\ldots, x_n$ and $w_1^2,\ldots,w_n^2$ be the Gauss quadrature nodes and weights associated to $\mu_H$. Then, since a Gauss quadrature is exact for polynomials of degree $2n-1$ or less, we have 
\[
(H_n)_{jk} = \int_{-\infty}^{\infty} x^{j+k-2} \mathrm{d}\mu_H(x) = \sum_{s = 1}^n w_s^2 x_s^{j+k-2} = \sum_{s = 1}^n (w_s x_s^{j-1})(w_s x_s^{k-1}).
\]
Therefore, every real positive definite Hankel matrices has a so-called {\em Fiedler factorization}~\cite{fiedler}, i.e., 
\[
H_n = K_{n,n}^*K_{n,n}, \qquad K_{n,n} = \begin{bmatrix} \underline{w} \,\Bigg| \, D_{\underline{x}}\underline{w}\, \Bigg| \, \cdots \, \Bigg| \,  D_{\underline{x}}^{n-1}\underline{w}\end{bmatrix}\!,
\]
where $K_{n,n}$ is a Krylov matrix with Hermitian argument and $K_{n,n}^*$ is the conjugate transpose of $K_{n,n}$. 
This means that $\sigma_j(H_n) = \sigma_j(K_{n,n})^2$ for $1\leq j\leq n$.  That is, a 
bound on the singular values of $H_n$ and the $\epsilon$-rank of $H_n$ directly follows from~\eqref{eq:KrylovBound3}. 
\begin{corollary}
Let $H_n$ be an $n\times n$ real positive definite Hankel matrix. Then, 
\begin{equation}
\sigma_{j+2k}(H_n) \leq 16 \left[\exp\left(\frac{\pi^2}{4\log(8\lfloor n/2\rfloor/\pi)}\right)\right]^{-2k+2} \sigma_{j}(H_n), \qquad 1\leq j+2k\leq n,
\label{eq:HankelBound}
\end{equation} 
and 
\[
{\rm rank}_\epsilon(H_n) \leq 2\bigg\lceil \frac{2\log\left(8\lfloor n/2\rfloor/\pi\right)\log\left(16/\epsilon\right)}{\pi^2}\bigg\rceil+2,
\]
where both bounds allow for $n$ to be an even or odd integer. 
\end{corollary}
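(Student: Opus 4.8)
The plan is to reduce the statement to the Krylov bound of Corollary~\ref{cor:KrylovBounds} through the Fiedler factorization $H_n = K_{n,n}^* K_{n,n}$ established just above, in which $K_{n,n}$ is the Krylov matrix with Hermitian argument $A = D_{\underline{x}}$ built from the $n$ Gauss quadrature nodes $x_1,\dots,x_n$ and the weight vector $\underline{w} = (w_1,\dots,w_n)^T$ attached to the measure $\mu_H$ in~\eqref{eq:moments}. Since $H_n$ is positive definite (and not merely positive semidefinite), $\mu_H$ has at least $n$ points of increase, so the Gauss nodes are distinct and the weights positive; hence $K_{n,n}$ really is an $n\times n$ Krylov matrix of the form~\eqref{eq:KrylovMatrices} and the factorization is an exact identity. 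Consequently $\sigma_j(H_n) = \sigma_j(K_{n,n})^2$ for $1\leq j\leq n$, and the singular value estimate for $H_n$ will be obtained simply by squaring the one for $K_{n,n}$.

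Concretely, I would invoke the weakened form of Corollary~\ref{cor:KrylovBounds} with $m = n$, which states that
\[
\sigma_{j+2k}(K_{n,n}) \leq 4\,\rho_H^{\,-k+[n]_2}\,\sigma_j(K_{n,n}), \qquad 1\leq j+2k\leq n,
\]
where $\rho_H := \exp\bigl(\pi^2/(4\log(8\lfloor n/2\rfloor/\pi))\bigr) > 1$ and $[n]_2 \in \{0,1\}$ is the parity indicator of $n$. Squaring this inequality and substituting $\sigma_j(H_n) = \sigma_j(K_{n,n})^2$ yields $\sigma_{j+2k}(H_n) \leq 16\,\rho_H^{\,-2k+2[n]_2}\,\sigma_j(H_n)$. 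Since $\rho_H > 1$ and $[n]_2 \leq 1$, replacing $[n]_2$ by $1$ only enlarges the right-hand side, and we land exactly on~\eqref{eq:HankelBound}; this last move is precisely what allows the single clean inequality~\eqref{eq:HankelBound} to hold for both even and odd $n$.

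For the $\epsilon$-rank bound, I would set $j = 1$ in~\eqref{eq:HankelBound} and look for the smallest integer $k$ with $16\,\rho_H^{\,-2k+2} \leq \epsilon$. Taking logarithms, this is $(2k-2)\log\rho_H \geq \log(16/\epsilon)$, i.e.\ $k \geq 1 + 2\log(8\lfloor n/2\rfloor/\pi)\log(16/\epsilon)/\pi^2$, so the least admissible $k$ equals $1 + \bigl\lceil 2\log(8\lfloor n/2\rfloor/\pi)\log(16/\epsilon)/\pi^2 \bigr\rceil$. For that $k$ one has $\sigma_{1+2k}(H_n) \leq \epsilon\,\sigma_1(H_n) = \epsilon\,\|H_n\|_2$, so from the definition~\eqref{eq:NumericalRank} of the $\epsilon$-rank we conclude ${\rm rank}_\epsilon(H_n) \leq 2k = 2\bigl\lceil 2\log(8\lfloor n/2\rfloor/\pi)\log(16/\epsilon)/\pi^2 \bigr\rceil + 2$, which is the stated bound and is valid for $n$ of either parity.

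I do not anticipate a real obstacle here: the corollary is a bookkeeping assembly of the Fiedler factorization, the identity $\sigma_j(H_n) = \sigma_j(K_{n,n})^2$, and Corollary~\ref{cor:KrylovBounds}. The two points deserving a line of care are (i) that \emph{strict} positive definiteness of $H_n$ is exactly what supplies $n$ distinct Gauss nodes, hence an $n\times n$ Krylov factor $K_{n,n}$ and an exact factorization, and (ii) the handling of the parity term $[n]_2$ in the passage from the Krylov estimate to the Hankel estimate, which is the source of the constant ``$16$'' and the slightly lossy exponent $-2k+2$ in~\eqref{eq:HankelBound}.
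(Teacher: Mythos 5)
Your proposal is correct and follows essentially the same route as the paper: the Gauss-quadrature Fiedler factorization $H_n=K_{n,n}^*K_{n,n}$, the identity $\sigma_j(H_n)=\sigma_j(K_{n,n})^2$, and squaring the weakened bound of Corollary~\ref{cor:KrylovBounds} with the parity term $[n]_2$ absorbed into the exponent $-2k+2$, followed by the same bookkeeping for the $\epsilon$-rank. Your added remark that strict positive definiteness guarantees $n$ distinct Gauss nodes with positive weights (so the factorization is exact) is a correct and welcome clarification of a point the paper leaves implicit.
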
 

We conclude that all real positive definite Hankel matrices have an $\epsilon$-rank of at most $\mathcal{O}(\log n\log(1/\epsilon))$, explaining why
low rank techniques are usually advantageous in computational mathematics on such matrices. 

Since a real positive semidefinite Hankel matrix can be arbitrarily approximated by a real positive definite Hankel matrix, the 
results from this section immediately extend to such Hankel matrices.\footnote{For a real positive semidefinite Hankel matrix one may improve our bounds on the singular values of $H_n$ by replacing $n$ by the rank of $H_n$.} This fact was exploited, but not proved in general, in~\cite{Townsend_16_01} 
to derive quasi-optimal complexity fast transforms between orthogonal polynomial bases. 

\section*{Acknowledgments}
Many of the inequalities in this paper were numerically verified using RKToolbox~\cite{rktoolbox} and we thank Stefan G\"{u}ttel
for providing support. We thank Yuji Nakatsukasa for carefully reading a draft of this manuscript and pointing us 
towards~\cite{Guttel_14_01} and~\cite{Nakatsukasa_16_01}. We also thank Sheehan Olver, Gil Strang, Marcus Webb, and Heather Wilber for discussions.

\appendix \section{Typos in an infinite product formula}\label{appendix:correction}
In Section~\ref{sec:Zolotarev} we noted that there were typos in an infinite product formula given by Lebedev~\cite[(1.11)]{Lebedev_77_01}. 
The mistake has unfortunately been copied several times in the literature. Here, we attempt to correct these typos. 

Lebedev~\cite{Lebedev_77_01} and his successors~\cite{Medovikov_05_01,Oseledets_07_01} were not concerned with the Zolotarev problem in~\eqref{eq:zolotarev}, 
but instead the equivalent problem of minimal Blaschke products in the half plane, i.e.,  
\begin{equation}
E_k([a,b]) = \min_{z_1,...,z_k\in \mathbb{C}} \max_{z\in [a,b]} \left| \prod_{s=1}^k \frac{z-z_s}{z+\overline{z_s}} \right|, \qquad 0<a<b<\infty.
\label{eq:Ek}
\end{equation} 
In~\cite[(1.11)]{Lebedev_77_01}, Lebedev presented an infinite product formula for $E_k$ that unfortunately contained typos and 
resulted in an erroneous lower bound for $E_k$ in~\cite[(1.12)]{Lebedev_77_01}. More recently, other erroneous lower bounds have been claimed 
in~\cite[(4.1)]{Guttel_14_01} for a related problem based on~\cite[(3.17)]{Medovikov_05_01}.  

To correct the situation we first show that with $Z_k := Z_k([-b,-a],[a,b])$ we have 
\[
 \sqrt{Z_k} = E_k([-b,-a])=E_k([a,b])=E_k([a/b,1]), 
\]
where the last two equalities are immediate from symmetry considerations and scaling. Since any $z_1,\ldots,z_k\in\mathbb{C}$ 
describes a rational function for $E_k([-b,-a])$ in~\eqref{eq:Ek}, the solution to~\eqref{eq:Ek} describes a rational function that is a candidate for the Zolotarev problem in~\eqref{eq:zolotarev} and we 
have $\sqrt{Z_k} \leq E_k([-b,-a])$. Conversely, taking $R(z)$ as in Theorem~\ref{thm:ZextremalRational} we get from property (c) that $R(z)$ has a set of poles being closed under complex conjugation. Property (b) tells us that, if $p_j$ is a pole of $R$, then $-p_j$ is a zero of $R$. Thus, from Theorem~\ref{thm:ZextremalRational} we have
\[
   R(z) = \pm \prod_{j=1}^k \frac{z+\overline{p_j}}{z-p_j},
\]
which implies that $E_k([-b,-a])\leq \max_{z\in [-b,-a]} | R(z)| \leq \sqrt{Z_k}$.  Here, in the last inequality we have applied property (a). We conclude that 
$E_k([-b,-a]) = \sqrt{Z_k}$.

Therefore, an infinite product formula for $E_k([\eta,1])$ that corrects~\cite[(1.11)]{Lebedev_77_01} is obtained by taking square roots (and setting $a/b=\eta$) in Theorem~\ref{thm:Zproduct}.  That is, for $0<\eta<1$ we have
\begin{equation} \label{eq:Eproduct}
  E_k([\eta,1]) = 2\rho^{-k}\prod_{\tau=1}^\infty \frac{(1+\rho^{-8\tau k})^2}{(1+\rho^{4k}\rho^{-8\tau k})^2}, \qquad \rho=\exp\left(\frac{\pi^2}{2\mu(\eta)}\right),
\end{equation} 
where $\mu(\cdot)$ is the Gr\"{o}tzsch ring function.  From~\eqref{eq:Eproduct}, one obtains upper and lower bounds for $E_k([\eta,1])$ that correct~\cite[(1.12)]{Lebedev_77_01},~\cite[(3.17)]{Medovikov_05_01}, and~\cite[(15)]{Oseledets_07_01}, namely
\begin{equation}
\frac{2\rho^{-k}} {(1+\rho^{-4k})^2} \leq E_k([\eta,1]) \leq \frac{2\rho^{-k}}{1+\rho^{-4k}} \leq 2\rho^{-k}, \qquad \rho=\exp\left(\frac{\pi^2}{2\mu(\eta)}\right).
\label{eq:Ebounds}
\end{equation}
More refined estimates than in~\eqref{eq:Ebounds} can be obtained by taking more terms from the infinite product in~\eqref{eq:Eproduct}. 

More recently, the best rational approximation of the sign function on $[-b,-a]\cup [a,b]$ has become important in numerical linear algebra because of a recursive 
construction of spectral projectors of matrices~\cite{Guttel_14_01,Nakatsukasa_16_01}.  In this setting, if $E_{m,n} :=E_{m,n}([-b,-a],[a,b])$ then
\[
E_{m,n} = \min_{r\in \mathcal R_{m,n}} \max_{z\in [-b,-a]\cup[a,b]} \left| r(z) - {\rm sgn}(z) \right|,\quad {\rm sgn}(z) = \begin{cases}1,&z\in [a,b],\\ -1,&z\in [-b,-a].\end{cases}
\]
Unfortunately, lower and upper bounds for $E_{2k,2k} = E_{2k-1,2k}$ are claimed in~\cite[(4.1)]{Guttel_14_01} based 
on the erroneous infinite product formula in~\cite[(3.17)]{Medovikov_05_01} and for $E_{2k+1,2k+1}=E_{2k+1,2k}$ in~\cite[(3.8)]{Nakatsukasa_16_01} 
by incorrectly citing the fundamental work of Gon\v{c}ar~\cite[(32)]{Goncar_69_01}.

We believe it is therefore useful to state infinite product formulas for $E_{k,k}$ and the resulting estimates. We recall from the proof of Theorem~\ref{thm:Zproduct} and Theorem~\ref{thm:ZextremalRational} that we have 
\[
E_{k,k}  = E_{2 \lfloor (k-1)/2 \rfloor +1, 2 \lfloor k/2 \rfloor}  =  \frac{2\sqrt{Z_k}}{1+Z_k}, \qquad   \mu\left(\frac{2\sqrt{Z_k}}{1+Z_k}\right) = \frac{\mu(Z_k)}{2}. 
\]
Thus, in the proof of Theorem~\ref{thm:Zproduct} we select $q=\exp(- 2\mu(E_{k,k})) = \rho^{-2k}$ and obtain
\begin{equation}
E_{k,k} =   4 \rho^{-k} \prod_{\tau=1}^\infty  \frac{ ( 1+\rho^{-4\tau k})^4 }{( 1+\rho^{2k} \rho^{-4\tau k})^4 },\qquad \rho=\exp\left(\frac{\pi^2}{2\mu(a/b)}\right).
\label{eq:sgnProduct}
\end{equation} 
Again, this infinite product in~\eqref{eq:sgnProduct} results in asymptotically tight corrected lower and upper bounds on $E_{k,k}$: 
\begin{equation} 
 \frac{4 \rho^{-k}}{(1+\rho^{-2k})^4} \leq E_{k,k} \leq \frac{4 \rho^{-k}}{(1+\rho^{-2k})^2} \leq 4 \rho^{-k}, \qquad \rho=\exp\left(\frac{\pi^2}{2\mu(a/b)}\right).
\label{eq:sgnBounds} 
\end{equation}
Similarly, more refined estimates than in~\eqref{eq:sgnBounds} can be obtained by taking more terms from the infinite product in~\eqref{eq:sgnProduct}. 

\end{document}